\newcommand{\ce}{\mathcal{C}^\infty}
\newcommand{\RR}{\mathbb{R}}
\newcommand{\referenza}{}
\newtheorem{thm}{Theorem}
\newtheorem*{thm*}{Theorem \referenza}
\newtheorem{cor}[thm]{Corollary}
\newtheorem*{cor*}{Corollary \referenza}
\newtheorem*{lem*}{Lemma \referenza}
\newtheorem{prop}[thm]{Proposition}
\newtheorem*{prop*}{Proposition \referenza}
\newtheorem*{conj*}{Conjecture \referenza}
\newtheorem{rmk}[thm]{Remark}
\newtheorem{defi}[thm]{Definition}
\def\XXint#1#2#3{{\setbox0=\hbox{$#1{#2#3}{\int}$ }
\vcenter{\hbox{$#2#3$ }}\kern-.6\wd0}}
\title{Variational problems in conformal geometry}
\author{Daniele Angella}
\address[Daniele Angella]{
Dipartimento di Matematica e Informatica ``Ulisse Dini''\\
Universit\`a degli Studi di Firenze\\
viale Morgagni 67/a\\
50134 Firenze, Italy
}
\email{daniele.angella@gmail.com}
\email{daniele.angella@unifi.it}
\author{Nicolina Istrati}
\address[Nicolina Istrati]{School of Mathematical Sciences, Tel Aviv University, Ramat Aviv, Tel Aviv 69978 Israel}
\email{nicolinai@mail.tau.ac.il}
\author{Alexandra Otiman}
\address[Alexandra Otiman]{
Dipartimento di Matematica e Fisica\\
Universit\`a degli Studi Roma Tre\\
Via della Vasca Navale, 84\\
00146 Roma, Italy\\
and Institute of Mathematics ``Simion Stoilow'' of the Romanian Academy, 21,
Calea Grivitei Street, 010702, Bucharest, Romania
}
\email{aiotiman@mat.uniroma3.it}
\author{Nicoletta Tardini}
\address[Nicoletta Tardini]{Dipartimento di Matematica ``G. Peano'' \\
Universit\`{a} degli studi di Torino \\
Via Carlo Alberto 10\\
10123 Torino, Italy}
\email{nicoletta.tardini@gmail.com}
\email{nicoletta.tardini@unito.it}
\keywords{conformal Hermitian geometry; Gauduchon metric; Euler-Lagrange equation}
\thanks{The first-named author is supported by project SIR2014 ``Analytic aspects in complex and hypercomplex geometry'' (code RBSI14DYEB), by project PRIN 2017 ``Real and Complex Manifolds: Topology, Geometry and holomorphic dynamics'' (code 2017JZ2SW5), and by GNSAGA of INdAM.
The third-named author is partially supported by a grant of the Romanian Ministry of Research and Innovation, CNCS - UEFISCDI,
project number PN-III-P4-ID-PCE-2016-0065, within PNCDI III.
The fourth-named author is supported by project SIR2014 ``Analytic aspects in complex and hypercomplex geometry'' (code RBSI14DYEB) and by GNSAGA of INdAM}
\subjclass[2010]{53C55; 53A30; 58E11}
\dedicatory{
Dedicated to Max Pontecorvo for his 60th birthday.\\
Buon compleanno, Max!
}
\begin{document}

\begin{abstract}
We study the Euler-Lagrange equation for several natural functionals defined on a conformal class of almost Hermitian metrics, whose expression involves the Lee form $\theta$ of the metric. We show that the Gauduchon metrics are the unique extremal metrics of the functional corresponding to the norm of the codifferential of the Lee form.  We prove that on compact complex surfaces, in every conformal class there exists a unique metric, up to multiplication by a constant, which is extremal for the functional given by  the $L^2$-norm of $dJ\theta$, where $J$ denotes the complex structure. These extremal metrics are not  the Gauduchon metrics in general, hence we extend their definition to any dimension and show that they give unique representatives, up to constant multiples, of any conformal class of almost Hermitian metrics. 
\end{abstract}

\maketitle

\section*{Introduction}
Let $X$ be a compact smooth $2n$-dimensional manifold endowed with an almost complex structure $J$. Fix an almost Hermitian metric $g$ on $(X,J)$, which we will identify with its associated $(1,1)$-form $\Omega=g(J\_,\_)$. 
Consider the linear operator $L_\Omega:=\Omega\wedge\_$ and its adjoint $\Lambda_\Omega:=L_\Omega^*$. We recall that $L_\Omega^{n-1}\colon \wedge^1X \to \wedge^{2n-1}X$ is an isomorphism, therefore one can define the {\em torsion $1$-form} of $\Omega$, also known as the {\em Lee form}, as:
\begin{equation}\label{eq:theta-def}
\theta_\Omega := \Lambda_\Omega d\Omega = Jd^{*_\Omega}\Omega \in\wedge^1X
\end{equation}
such that
$$ d\Omega^{n-1}=\theta_\Omega\wedge\Omega^{n-1}. $$
Furthermore, the Lee form can be seen to coincide with the trace of the torsion of the Chern connection \cite[Th\'eor\`eme 3]{gauduchon-CRAS}. 

In general, the recipe for defining special classes of Hermitian metrics on complex manifolds is by imposing $\Omega$ to be in the kernel of a specific differential operator. A well-known list of examples include {\em strong K\" ahler with torsion} (SKT), also known as {\em pluriclosed}, ($dd^c\Omega=0$), {\em balanced} ($d\Omega^{n-1}=0$), {\em locally conformally K\" ahler} (lcK) ($d\Omega=\alpha\wedge \Omega$ for a closed one-form $\alpha$), {\em locally conformally balanced}  ($d\Omega^{n-1}=\theta \wedge \Omega^{n-1}$, for a closed one-form $\theta$), all of these being generalizations of the K\" ahler condition $d\Omega=0$.

Another natural class of Hermitian metrics is given by the \textit{Gauduchon metrics}, which are defined as having co-closed Lee form, or equivalently as satisfying $dd^c\Omega^{n-1}=0$. These metrics turned out to be a very useful tool in conformal geometry, since by the celebrated result \cite[Th\'eor\`eme 1]{gauduchon-CRAS}, any conformal class of  almost Hermitian metrics contains a Gauduchon metric, unique up to multiplication with positive constants, provided the manifold is compact and not a Riemann surface.

On the other hand, it is reasonable to expect that interesting classes of metrics arise as critical points of naturally defined functionals on the space of Hermitian metrics. In the general Hermitian setting, two such functionals have been considered in the literature. Let $\mathcal{H}_1$ be the set of almost Hermitian metrics on $X$ with total volume one.
\begin{itemize}
\item The {\em Gauduchon functional} \cite[Section III]{gauduchon} is defined by:
$$ \mathcal L_G \colon \mathcal{H}_1 \to \mathbb R, \qquad
\mathcal L_G(\Omega) := \int_X |\theta_\Omega|_\Omega^2 \frac{\Omega^n}{n!} = \frac{1}{(n-1)!} \int_X \theta_\Omega \wedge J\theta_\Omega \wedge \Omega^{n-1} . $$
The Euler-Lagrange equation for $\mathcal L_G$ is given in \cite[Equation (48)]{gauduchon}. For compact complex surfaces ($n=2$), it turns out that the critical points of $\mathcal L_G$ are precisely the K\"ahler metrics  ({\itshape i.e.} $\theta_{\Omega}=0$) \cite[Th\'eor\`eme III.4]{gauduchon}, which are in fact absolute minima.
In \cite[Section V]{gauduchon}, the restriction of $\mathcal L_G$ to a conformal class $\{\Omega\}_1:=\{\Omega\}\cap\mathcal{H}_1$ in $\mathcal H_1$ is also considered, and the critical points $\Omega$ are characterized by the equation  $|\theta_\Omega|_\Omega^2+2d^{*_\Omega}\theta_\Omega=k\in\RR$ in \cite[Proposition at page 516]{gauduchon}. 

\item The {\em Vaisman functional} \cite{vaisman}:
$$ \mathcal U_V \colon \mathcal{H}_1 \to \mathbb R, \qquad
\mathcal U_V(\Omega) := \int_X |d\theta_\Omega|_\Omega^2 \frac{\Omega^n}{n!} = \int_X d\theta_\Omega \wedge *_\Omega d\theta_\Omega $$
is conformally invariant.
On compact complex surfaces, it rewrites as $\mathcal U_V(\Omega) = -\int_X d\theta_\Omega \wedge Jd\theta_\Omega$. In this case, its Euler-Lagrange equation is given in \cite[Theorem 2.1, Equation (2.15)]{vaisman}, and its second variation is computed in \cite[Equation (3.10)]{vaisman}. Clearly, locally conformally K\"ahler metrics for $n=2$,  and for arbitrary $n$, locally conformally balanced metrics are critical points and absolute minima of $\mathcal U_V$.
\end{itemize}

In the present note, we investigate a few functionals of similar type, restricted to a conformal class of normalized almost Hermitian metrics $\{\Omega\}_1\subset\mathcal H_1$. The first functional we consider is given by:
$$
\mathcal{G} \colon \mathcal H_1 \mapsto \int_X |d^{*_\Omega} \theta_\Omega|_\Omega^2 \frac{\Omega^n}{n!}.
$$
Clearly, Gauduchon metrics are critical points, and in fact absolute minima, for $\mathcal G$. Our  first main result states that these are all the critical points of $\mathcal G$:

\renewcommand{\referenza}{\ref{thm:critical-g}}
\begin{thm*}
Let $(X,J)$ be a $2n$-dimensional compact manifold endowed with an almost complex structure, where $n\geq2$. Let $\Omega\in \mathcal H_1$ and let $\{\Omega\}_1\subset \mathcal H_1$ denote the set of almost Hermitian metrics of total volume one which are conformal to $\Omega$. Then the following are equivalent:
\begin{enumerate}
\item $\Omega$ is a critical point for $\mathcal G$ on $\mathcal H_1$;
\item $\Omega$ is a critical point for the restriction of $\mathcal G$ to $\{\Omega\}_1$;
\item $\Omega$ is Gauduchon.
\end{enumerate}
\end{thm*}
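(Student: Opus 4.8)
The plan is to prove the cycle $(3)\Rightarrow(1)\Rightarrow(2)\Rightarrow(3)$, where only the last implication requires real work. Since $\mathcal{G}(\Omega)=\int_X|d^{*_\Omega}\theta_\Omega|_\Omega^2\,\frac{\Omega^n}{n!}\geq 0$ and a Gauduchon metric is exactly one with $d^{*_\Omega}\theta_\Omega=0$, any Gauduchon $\Omega$ realizes the global minimum value $0$ of $\mathcal{G}$ on $\mathcal{H}_1$ and is therefore a critical point, giving $(3)\Rightarrow(1)$. The implication $(1)\Rightarrow(2)$ is immediate, as $\{\Omega\}_1$ is a submanifold of $\mathcal{H}_1$ through $\Omega$, so a critical point on $\mathcal{H}_1$ is \emph{a fortiori} critical for the restriction. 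All content lies in $(2)\Rightarrow(3)$.

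For $(2)\Rightarrow(3)$ I would first make the restriction of $\mathcal{G}$ to the conformal class fully explicit. Writing $\Omega_f=e^{2f}\Omega$ for $f\in C^\infty(X)$, the two standard conformal laws
$$ \theta_{\Omega_f}=\theta_\Omega+2(n-1)\,df,\qquad d^{*_{\Omega_f}}\alpha=e^{-2f}\big(d^{*_\Omega}\alpha-2(n-1)\langle df,\alpha\rangle_\Omega\big) $$
(the latter valid for any $1$-form $\alpha$ in real dimension $2n$) combine to give
$$ d^{*_{\Omega_f}}\theta_{\Omega_f}=e^{-2f}F(f),\qquad F(f):=u+2(n-1)\Delta f-2(n-1)\langle df,\theta_\Omega\rangle_\Omega-4(n-1)^2|df|_\Omega^2, $$
where $u:=d^{*_\Omega}\theta_\Omega$ and $\Delta:=d^{*_\Omega}d$ is the nonnegative Laplacian. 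Since the pointwise norm of a function is conformally invariant and $\frac{\Omega_f^n}{n!}=e^{2nf}\frac{\Omega^n}{n!}$, one gets the clean expression $\mathcal{G}(\Omega_f)=\int_X e^{2(n-2)f}F(f)^2\,\frac{\Omega^n}{n!}$.

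Next I would compute the first variation along $f=t\varphi$. Differentiating at $t=0$, where $F(0)=u$ and $F'(0)[\varphi]=2(n-1)(\Delta\varphi-\langle d\varphi,\theta_\Omega\rangle_\Omega)$, and integrating by parts — using self-adjointness of $\Delta$ and $d^*(h\beta)=h\,d^*\beta-\langle dh,\beta\rangle$ — turns criticality on $\{\Omega\}_1$ into the statement that the Euler-Lagrange density is $L^2$-orthogonal to all mean-zero functions, hence constant. Normalizing $\int_X\frac{\Omega^n}{n!}=1$, this yields
$$ 2(n-1)\Delta u+2(n-1)\langle du,\theta_\Omega\rangle_\Omega-n\,u^2=\mu $$
for some constant $\mu\in\RR$.

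Finally I would extract $u\equiv 0$ by combining an integral identity with a pointwise maximum principle, which is the step I expect to be the genuine obstacle: a naive pairing against $u$ only yields $\int_X u^3=2(n-1)\int_X|du|_\Omega^2$, which is inconclusive. Integrating the equation over $X$ and using $\int_X\Delta u=0$ and $\int_X\langle du,\theta_\Omega\rangle_\Omega=\int_X u\,d^{*_\Omega}\theta_\Omega=\int_X u^2$ gives $\mu=(n-2)\int_X u^2\geq 0$. On the other hand, evaluating the equation at a point $x_1$ where the smooth function $u$ attains its minimum — so $du(x_1)=0$ and $\Delta u(x_1)\leq 0$ — gives $\mu=2(n-1)\Delta u(x_1)-n\,u(x_1)^2\leq -n\,u(x_1)^2\leq 0$. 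Hence $\mu=0$ and $u(x_1)=0$, so $u\geq 0$ everywhere; since $\int_X u\,\frac{\Omega^n}{n!}=\int_X d^{*_\Omega}\theta_\Omega\,\frac{\Omega^n}{n!}=0$, a nonnegative function with vanishing integral is identically zero. Thus $d^{*_\Omega}\theta_\Omega=u\equiv 0$, i.e. $\Omega$ is Gauduchon, establishing $(2)\Rightarrow(3)$ uniformly for all $n\geq 2$.
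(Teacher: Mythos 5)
Your proposal is correct and follows essentially the same route as the paper: compute the first variation of $\mathcal G$ within the normalized conformal class to obtain the Euler--Lagrange equation (yours, after dividing by $2(n-1)$, is exactly the paper's equation for $u=d^{*_\Omega}\theta_\Omega$), then integrate over $X$ to get $\mu=(n-2)\int_X u^2\geq 0$ and evaluate at a minimum point of $u$ to force $\mu=0$ and $u\equiv 0$. The only differences are cosmetic (the $e^{2f}$ parametrization and the explicit nonlinear formula for $\mathcal G(\Omega_f)$ before linearizing, versus the paper's direct expansion $\phi_t=1+t\dot\phi+o(t)$), plus your explicit treatment of the easy equivalence with criticality on all of $\mathcal H_1$, which the paper relegates to Corollary~\ref{cor:critG} by the same observation.
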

\noindent Together with Gauduchon's result, this implies that $\mathcal G$ has exactly one critical point in every $\{\Omega\}_1\subset\mathcal H_1$.

Another functional we study is given by: 
$$
\mathcal{F} \colon \{\Omega\}_1 \mapsto \int_X |dJ\theta_\Omega|_\Omega^2 \frac{\Omega^n}{n!}.
$$
The main motivation for considering this functional comes from the fact that the form $dJ\theta_{\Omega}$ is related to the curvature of natural connections on the manifold. More precisely, it measures the difference between the Ricci curvatures of the Chern connection and the Bismut connection of the Hermitian metric $\Omega$ (see Remark~\ref{rmk:motivation-f}). Furthermore, in the context of lcK metrics, $dJ\theta_\Omega$ is, up to a constant multiple, the Chern curvature of the naturally associated weight line bundle to an lcK metric. 

The critical points of $\mathcal F$
are described in Proposition \ref{prop:critical-f} by the equation:
$$ (n-2)|dJ\theta_\Omega|_\Omega^2-2(n-1)(dd^c)^{*_\Omega}\Delta_\Omega\Omega = k\in\RR.$$
When $n=2$, this is equivalent to the equation:
\begin{equation*}
dd^c\Delta_\Omega\Omega=0
\end{equation*}
and when $J$ is moreover integrable, this is  further equivalent to:
\begin{equation*}
dd^c(\Lambda_{\Omega}(dJ\theta_\Omega)\Omega)=0.
\end{equation*}
We show in Corollary~\ref{cor:minF} that in this case, the critical points of $\mathcal F$ exist, are unique, and minimize $\mathcal F$.

Motivated by this result, we introduce a new class of almost Hermitian metrics, which we call \textit{distinguished}, defined by the condition:
\begin{equation*}
dd^c(f^{n-1}_\Omega\Omega^{n-1})=0, \quad \text{ where } f_\Omega:=|\theta_\Omega|^2_\Omega+d^{*_\Omega}\theta_\Omega=-\Lambda_{\Omega}(dJ\theta_\Omega).
\end{equation*}
These metrics are not in general Gauduchon (see Corollary~\ref{cor:f-2-vaisman2}). However,  similarly to Gauduchon metrics, they give rise to canonical representatives in a given conformal class:

\renewcommand{\referenza}{\ref{thm:criticalF}}
\begin{thm*}
Let $(X,J)$ be a compact almost complex manifold of real dimension $2n>2$, and let $\{\Omega\}_1$ be a conformal class of normalized almost Hermitian metrics. Then there exists and is unique a distinguished metric $\Omega\in\{\Omega\}_1$. This metric is either balanced, i.e. $\theta_\Omega=0$, or is characterized by the property that $f_\Omega$ is strictly positive on $X$ and the metric $f_\Omega\Omega$ is Gauduchon.
\end{thm*}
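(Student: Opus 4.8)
The plan is to linearise the problem by using Gauduchon's theorem twice. First I would fix the Gauduchon representative $\Omega_G$ of the conformal class, which exists and is unique up to a positive constant by \cite[Th\'eor\`eme 1]{gauduchon-CRAS}, and write an arbitrary metric of the class as $\Omega=e^{2u}\Omega_G$ with $u\in\ce(X)$. Since $f^{n-1}_\Omega\Omega^{n-1}=(f_\Omega\, e^{2u})^{n-1}\Omega_G^{n-1}$, the distinguished condition $dd^c(f^{n-1}_\Omega\Omega^{n-1})=0$ says exactly that $(f_\Omega\, e^{2u})^{n-1}$ lies in the kernel of the linear operator $v\mapsto dd^c(v\,\Omega_G^{n-1})$. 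Because $\Omega_G$ is Gauduchon the constants lie in this kernel, and by Gauduchon's analysis the kernel is one-dimensional and spanned by a positive function, hence consists precisely of the constants. As $X$ is connected, I would conclude that $\Omega$ is distinguished if and only if $f_\Omega\, e^{2u}\equiv\lambda$ for some $\lambda\in\RR$.

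Next, and this is the crux, I would compute $f_\Omega\, e^{2u}$ explicitly in terms of $u$. Using the conformal laws $\theta_\Omega=\theta_G+2(n-1)du$, $\,|\cdot|^2_\Omega=e^{-2u}|\cdot|^2_G$, the conformal change of the codifferential on $1$-forms, and $d^{*_G}\theta_G=0$, both $|\theta_\Omega|^2_\Omega$ and $d^{*_\Omega}\theta_\Omega$ turn out to be $e^{-2u}$ times expressions containing the quadratic term $\pm 4(n-1)^2|du|^2_G$ with opposite signs. The decisive point is that these quadratic terms cancel, so that after multiplying by $e^{2u}$ one is left with
\[ f_\Omega\, e^{2u} = |\theta_G|_G^2 + 2(n-1)\bigl(\Delta_G u + \langle\theta_G,du\rangle_G\bigr), \]
which is \emph{linear} in $u$. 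Thus the distinguished equation becomes the linear elliptic equation $\mathcal{A}u:=2(n-1)\bigl(\Delta_G u+\langle\theta_G,du\rangle_G\bigr)=\lambda-|\theta_G|^2_G$, where $\mathcal A$ is a second-order elliptic operator with no zeroth-order term and $\lambda$ is still to be determined. I expect this cancellation to be the main obstacle: getting the conformal factors and the codifferential term exactly right is precisely what collapses the nonlinear problem to a solvable linear one of Gauduchon type.

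Finally I would solve this equation by the same Fredholm and maximum-principle scheme underlying Gauduchon's theorem. The operator $\mathcal A$ annihilates the constants, so by the strong maximum principle $\ker\mathcal A=\RR$, and since $\mathcal A$ has index zero its formal $L^2(\mathrm{vol}_G)$-adjoint satisfies $\ker\mathcal A^*=\RR\cdot\rho_0$ with $\rho_0>0$. The solvability condition $\int_X(\lambda-|\theta_G|^2_G)\,\rho_0\,\mathrm{vol}_G=0$ then forces a unique value of $\lambda$, manifestly $\lambda\geq0$, and yields $u$ uniquely up to an additive constant; the normalization $\int_X e^{2nu}\,\Omega_G^n/n!=1$ fixes that constant, giving existence and uniqueness of the distinguished metric. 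To identify its nature I would use $\int_X f_\Omega\,\Omega^n/n!=\int_X|\theta_\Omega|^2_\Omega\,\Omega^n/n!\geq 0$, which shows $\lambda\geq0$ with equality if and only if $\theta_\Omega\equiv0$. When $\lambda=0$ the metric is balanced, and when $\lambda>0$ one has $f_\Omega=\lambda e^{-2u}>0$ on $X$ together with $f_\Omega\Omega=\lambda\,\Omega_G$ Gauduchon, exactly as claimed.
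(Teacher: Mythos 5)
Your proposal is correct and follows essentially the same route as the paper: fix the Gauduchon representative, reduce the distinguished condition to ``$e^{2u}f_\Omega$ is constant'' via Gauduchon's kernel theorem (the paper phrases this through the elliptic operator $U_\Omega q=\Delta_\Omega q-\langle dq\vert\theta_\Omega\rangle_\Omega+q\,d^{*_\Omega}\theta_\Omega$), exploit the identical cancellation of the $4(n-1)^2|du|^2_G$ terms to obtain the linear equation $\Delta_G u+\langle du\vert\theta_G\rangle_G=\tfrac{1}{2(n-1)}\bigl(\lambda-|\theta_G|^2_G\bigr)$, and solve it by the Fredholm alternative. The only cosmetic difference is that the paper computes the adjoint kernel explicitly and finds it consists of the constants (so your $\rho_0$ is $\equiv 1$ and $\lambda=\int_X|\theta_G|^2_G\,\frac{\Omega_G^n}{n!}$), whereas you invoke the general positive eigenfunction; both yield the same existence, uniqueness, and characterization.
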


Additionally, we discuss the functional:
$$
\mathcal{A} \colon\{\Omega\}_1\mapsto \int_X | d \Omega |_\Omega^2 \frac{\Omega^n}{n!}.
$$
Clearly almost K\"ahler metrics are minimizers for $\mathcal A$. We note that $\mathcal A(\Omega)$ also coincides with the $L^2$-norm of $d^c\Omega$, which is the torsion of the Bismut connection (see Remark~\ref{rmk: motivation-A}). This gives an alternative motivation for studying this functional.  

The critical points of $\mathcal{A}$ are described, in Proposition~\ref{prop:crit-points-A}, by the equation:
$$
(n-1) | d\Omega |_{\Omega}^2 + 2 d^{*_\Omega} \theta_\Omega = k\in\RR.
$$
In particular, if $\{\Omega\}_1$ is an lcK conformal class or if $n=2$, the functional $\mathcal A$ is a constant multiple of the functional $\mathcal L_G$ (see Remark~\ref{remdOmega}). It follows by \cite[Théorème~III.4]{gauduchon} that the critical points of $\mathcal A$ viewed on $\mathcal H_1$, for $n=2$, are precisely the K\"ahler metrics. 

The last functional we discuss is given by:
\begin{equation*}
\mathcal{R} \colon\{\Omega\}_1\mapsto \int_X | dd^c \Omega |_\Omega^2 \frac{\Omega^n}{n!}.
\end{equation*}
By definition, SKT metrics minimize $\mathcal R$. In Proposition~\ref{prop:r-critical} we show that the critical points of this functional are characterized by the equation:
\begin{equation*}
(n-4)|dd^c\Omega|^2_\Omega+2\Lambda_\Omega(dd^c)^{*_\Omega}dd^c\Omega=k\in\RR.
\end{equation*}
Furthermore, in the case $n=2$, the critical points of $\mathcal R$ exist, are unique and are precisely the Gauduchon metrics (see Corollary~\ref{cor:critR}). In particular, they minimize $\mathcal R$.

In the last section of this note, we do some explicit computations related to these functionals on the Inoue-Bombieri surfaces of type $S_M$. They show that, generally, the infimum of the functionals $\mathcal F$ and $\mathcal A$  on $\mathcal H_1$ can be $0$, without it being attained.

\break

\def\arraystretch{1.5}
\begin{table}[h!]
{\resizebox{1.05\textwidth}{!}{
\begin{tabular}{c||c|c|c}
{\bfseries functional} & \multicolumn{2}{c|}{\bfseries critical points}&\multirow{2}{*}{\bfseries reference}\\
{\bfseries restricted to $\{\Omega\}_1$} & {\bfseries complex surfaces} & {\bfseries almost complex, $\dim X\geq 4$}&  \\
\hline\hline
$\mathcal L_G(\Omega) = \int_X |\theta_\Omega|_\Omega^2 \frac{\Omega^n}{n!}$ & K\"ahler ($!$, $\min$) & $|\theta_\Omega|_\Omega^2+2d^{*_\Omega}\theta_\Omega=k$ & \cite{gauduchon} \\
\hline
$U_V(\Omega) := \int_X |d\theta_\Omega|_\Omega^2 \frac{\Omega^n}{n!}$ & \multicolumn{2}{c|}{conformal invariant} & \cite{vaisman} \\
\hline\hline
$\mathcal{G}(\Omega) = \int_X |d^{*_\Omega} \theta_\Omega|_\Omega^2 \frac{\Omega^n}{n!}$ & \multicolumn{2}{c|}{Gauduchon ($\exists!$, $\min$)} & Thm \ref{thm:critical-g} \\
\hline
$\mathcal{F}(\Omega) = \int_X |dJ\theta_\Omega|_\Omega^2 \frac{\Omega^n}{n!}$ & distinguished ($\exists!$, $\min$) ($dd^c\Delta_\Omega\Omega=0$) & $(n-2)|dJ\theta_\Omega|_\Omega^2-2(n-1)(dd^c)^{*_\Omega}\Delta_\Omega\Omega = k$ & Cor \ref{cor:minF}, Prop \ref{prop:critical-f} \\
\hline
$\mathcal{A}(\Omega) = \int_X | d \Omega |_\Omega^2 \frac{\Omega^n}{n!}$ & K\"ahler ($!$, $\min$) & $(n-1) | d\Omega |_{\Omega}^2 + 2 d^{*_\Omega} \theta_\Omega = k$ & Prop \ref{prop:crit-points-A}, \cite{gauduchon} \\
\hline
$\mathcal{R} (\Omega) = \int_X | dd^c \Omega |_\Omega^2 \frac{\Omega^n}{n!}$ & Gauduchon/SKT ($\exists!$, $\min$) & $(n-4)|dd^c\Omega|^2_\Omega+2\Lambda_\Omega(dd^c)^{*_\Omega}dd^c\Omega=k$ & Prop \ref{prop:r-critical} \\
\hline\hline
\end{tabular}
}}
\caption{Summary of the critical points of the functionals. (Here, $k$ denotes a real constant.)}
\label{table:name}
\end{table}

\subsection*{Notation}
Given an almost complex manifold $(X,J)$, we denote by $\wedge^kX$ and by $\wedge^{p,q}X$ the spaces of smooth $k$-forms and $(p,q)$-forms, respectively, on $X$.   We extend $J$ to act as an isomorphism on $\wedge^{p,q}X$ by $J \alpha = \sqrt{-1}^{q-p}\alpha$, $\alpha\in\wedge^{p,q}X$, following \cite[(2.10)]{besse}. Via this extension, it follows that $J^2=(-1)^k\mathrm{id}$, so that $J^{-1}=(-1)^kJ=J^*$ on $\wedge^kX$, where $J^*$ is the pointwise adjoint of $J$ with respect to some (and so any) Hermitian metric. We denote by $d^c$ the differential operator $d^c:=-J^{-1}dJ$. For a Hermitian metric $\Omega$ and a one-form $\alpha$, we denote by $\alpha^{\sharp_\Omega}$ the vector field which is metric dual to $\alpha$ with respect to $\Omega$. For a vector field $V$, we denote by $\iota_V$ the interior product with $V$.

\section{The functional $\mathcal{G}$}

Let $(X,J)$ be a compact almost complex manifold, let $\mathcal H_1$ denote the set of almost Hermitian metrics on $(X,J)$ of total volume $1$, and let 
\begin{equation*}
\{\Omega\}_1:= \left\{ \phi\cdot \Omega \;:\; \phi\in\ce(X,\RR), \phi>0, \int_X \phi^n\frac{\Omega^n}{n!} =1\right\} \subseteq \mathcal H_1
\end{equation*} 
denote a conformal class of normalized Hermitian metrics. We consider the following functional:
\begin{equation}\label{eq:g}
\mathcal G \colon \{\Omega\}_1 \to \mathbb R, \qquad
\mathcal G(\Omega) := \int_X ( d^{*_\Omega} \theta_\Omega )^2 \frac{\Omega^n}{n!} = - \int_X d^{*_\Omega} \theta_\Omega \cdot d *_\Omega \theta_\Omega.
\end{equation}

\begin{thm}\label{thm:critical-g}
Let $n\geq 2$, let $(X,J)$ be a $2n$-dimensional compact manifold endowed with an almost complex structure and let $\{\Omega\}_1$ denote a conformal class of almost Hermitian metrics with total volume one. Then $\Omega\in \{\Omega\}_1$ is a critical point of the functional $\mathcal{G}$ defined by \eqref{eq:g} if and only if $\Omega$ is a Gauduchon metric.
\end{thm}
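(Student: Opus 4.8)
The plan is to compute the first variation of $\mathcal G$ along conformal paths inside $\{\Omega\}_1$, read off the Euler--Lagrange equation, and then extract the Gauduchon condition from it by a maximum principle. The direction ``Gauduchon $\Rightarrow$ critical'' is immediate: since $\mathcal G(\Omega)=\int_X(d^{*_\Omega}\theta_\Omega)^2\frac{\Omega^n}{n!}\ge 0$ vanishes exactly on Gauduchon metrics, those are the global minima of $\mathcal G$ on $\{\Omega\}_1$, hence critical points. The content lies in the converse.

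For the variation I would take a path $\Omega_s=\phi_s\Omega$ with $\phi_0=1$ and $\partial_s\phi_s\big|_{s=0}=f$, subject to $\int_X\phi_s^n\frac{\Omega^n}{n!}=1$, whose linearization is the constraint $\int_X f\,\frac{\Omega^n}{n!}=0$; conversely every mean-zero $f$ arises this way. Two standard conformal facts drive the computation: the Lee form transforms as $\theta_{\Omega_s}=\theta_\Omega+(n-1)\,d\log\phi_s$ (from $d\Omega_s^{n-1}=\theta_{\Omega_s}\wedge\Omega_s^{n-1}$ and the isomorphism $L_\Omega^{n-1}$), and the Hodge star picks up a conformal weight, namely $*_{\Omega_s}=\phi_s^{\,n-1}*_\Omega$ on $1$-forms and $*_{\Omega_s}=\phi_s^{-n}*_\Omega$ on $2n$-forms. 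Writing $h_\Omega:=d^{*_\Omega}\theta_\Omega$ and $\psi_s:=\phi_s^{\,n-1}$, using $d^{*_\Omega}=-*_\Omega d*_\Omega$ on $1$-forms together with the telescoping identity $(n-1)\phi_s^{\,n-1}*_\Omega d\log\phi_s=*_\Omega d\psi_s$, I expect to reach the closed form
\[
d^{*_{\Omega_s}}\theta_{\Omega_s}=\phi_s^{-n}\big(\psi_s\,h_\Omega+\Delta_\Omega\psi_s-\langle d\psi_s,\theta_\Omega\rangle_\Omega\big),
\]
where $\Delta_\Omega=d^{*_\Omega}d$ is the nonnegative Hodge Laplacian on functions. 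Differentiating at $s=0$, combining with $\partial_s\big(\tfrac{\Omega_s^n}{n!}\big)\big|_{s=0}=nf\,\tfrac{\Omega^n}{n!}$, and integrating by parts (self-adjointness of $\Delta_\Omega$, and $\int_X\langle df,h_\Omega\theta_\Omega\rangle_\Omega=\int_X f\,d^{*_\Omega}(h_\Omega\theta_\Omega)$), the first variation becomes $\partial_s\mathcal G\big|_{s=0}=\int_X f\,E(\Omega)\,\tfrac{\Omega^n}{n!}$ with
\[
E(\Omega)=(n-2)h_\Omega^2+2(n-1)\Delta_\Omega h_\Omega-2(n-1)\,d^{*_\Omega}(h_\Omega\theta_\Omega).
\]
Since $f$ ranges over all mean-zero functions, $\Omega$ is critical if and only if $E(\Omega)$ equals a constant $k\in\RR$.

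To finish I would simplify the Euler--Lagrange equation via $d^{*_\Omega}(h_\Omega\theta_\Omega)=h_\Omega\,d^{*_\Omega}\theta_\Omega-\langle dh_\Omega,\theta_\Omega\rangle_\Omega=h_\Omega^2-\langle dh_\Omega,\theta_\Omega\rangle_\Omega$ (because $d^{*_\Omega}\theta_\Omega=h_\Omega$), which collapses $E(\Omega)=k$ to
\[
2(n-1)\big(\Delta_\Omega h_\Omega+\langle dh_\Omega,\theta_\Omega\rangle_\Omega\big)-n\,h_\Omega^2=k.
\]
Integrating over $X$ and using $\int_X\langle dh_\Omega,\theta_\Omega\rangle_\Omega\,\tfrac{\Omega^n}{n!}=\int_X h_\Omega\,d^{*_\Omega}\theta_\Omega\,\tfrac{\Omega^n}{n!}=\int_X h_\Omega^2\,\tfrac{\Omega^n}{n!}$ (again $d^{*_\Omega}\theta_\Omega=h_\Omega$), the Laplacian term drops and $k=(n-2)\int_X h_\Omega^2\,\tfrac{\Omega^n}{n!}\ge 0$. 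The equation then rewrites as
\[
\Delta_\Omega h_\Omega+\langle dh_\Omega,\theta_\Omega\rangle_\Omega=\frac{k+n\,h_\Omega^2}{2(n-1)}\ge 0.
\]
Passing to the analyst's sign convention, $h_\Omega$ is a supersolution of a Laplace-type elliptic operator with vanishing zeroth-order term; as $X$ is compact, $h_\Omega$ attains its minimum, so the strong maximum principle forces $h_\Omega$ to be constant on each connected component. A constant $h_\Omega\equiv c$ then satisfies $-nc^2=k=(n-2)c^2$, i.e. $(2n-2)c^2=0$, whence $c=0$. Thus $h_\Omega=d^{*_\Omega}\theta_\Omega\equiv 0$ and $\Omega$ is Gauduchon.

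The two delicate points are the careful bookkeeping of conformal weights and signs in the variation of the codifferential $d^{*_{\Omega_s}}$, which is the most calculation-heavy step, and the choice of closing argument. The naive energy method (pairing $E(\Omega)=k$ with $h_\Omega$) leaves an indefinite cubic term $\int_X h_\Omega^3$ and does not conclude; the maximum principle, available precisely because the sign of $k$ and of the $h_\Omega^2$-nonlinearity cooperate, is the key device and works uniformly for all $n\ge 2$ (in particular at $n=2$, where $k=0$ automatically).
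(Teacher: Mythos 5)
Your proposal is correct and follows essentially the same route as the paper: the identical conformal variation produces the identical Euler--Lagrange equation (your $E(\Omega)=k$ is exactly $2(n-1)$ times equation \eqref{critGG}), and the same integration over $X$ gives $k=(n-2)\int_X h_\Omega^2\,\frac{\Omega^n}{n!}\ge 0$. The only (harmless) divergence is the closing step: the paper just evaluates the equation at a minimum point $x_{\min}$ of $h_\Omega=d^{*_\Omega}\theta_\Omega$, using $\Delta_\Omega h_\Omega(x_{\min})\le 0$ together with $h_\Omega(x_{\min})\le 0$ (from $\int_X h_\Omega\,\frac{\Omega^n}{n!}=0$), whereas you invoke the strong Hopf maximum principle to conclude first that $h_\Omega$ is constant; both arguments are valid.
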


\begin{proof}
A Gauduchon metric $\Omega\in \{\Omega\}_1$ satisfies $d^{*_\Omega}\theta_\Omega=0$, so it is clearly a minimum point for $\mathcal{G}$.

Conversely, let us first determine the equation that $\Omega\in \{\Omega\}_1$ has to satisfy in order to be a critical point for $\mathcal G$.
We first observe some general formulas. For an almost Hermitian metric $\Omega$ on $X^{2n}$ and for $\tilde\Omega=\phi\cdot \Omega$ in its conformal class, with $\phi\in\ce(X,\RR)$, $\phi>0$, one has:
\begin{eqnarray*}
\mathrm{Vol}_{\tilde\Omega}(X) &:=&\int_X\frac{\tilde\Omega^n}{n!}= \int_X \phi^n \frac{\Omega^n}{n!}, \\
\theta_{\tilde\Omega} &=& \theta_\Omega+(n-1)d\log\phi, \\
*_{\tilde\Omega}\lfloor_{\wedge^kX} &=& \phi^{n-k} *_{\Omega}\lfloor_{\wedge^kX} .
\end{eqnarray*}

Consider a variation of $\Omega$ in $\{\Omega\}_1$,
$$ \tilde\Omega_t = \phi_t \cdot \Omega , $$
where, for $|t|$ small enough, we write:
\begin{equation*}
\phi_t=1+t\dot\phi+o(t), \ \ \dot\phi\in\ce(X,\RR).
\end{equation*}
The condition $\mathrm{Vol}_{\tilde\Omega}(X)=1$ implies that the function $\dot\phi$ must satisfy:
\begin{equation}\label{varOm} 
\int_X \dot\phi \frac{\Omega^n}{n!}=0.
\end{equation}

We compute:
\begin{eqnarray*}
\mathcal G(\tilde\Omega_t) &=&
\int_X \left(d^{*_{\tilde\Omega_t}}\theta_{\tilde\Omega_t}\right)^2 \frac{\tilde\Omega_t^n}{n!}
= \int_X \left|d *_{\tilde\Omega_t}\theta_{\tilde\Omega_t}\right|_{\tilde\Omega_t}^2 \frac{\tilde\Omega_t^n}{n!} \\
&=& \int_X \left|d \left( \phi_t^{n-1} *_{\Omega}\left(\theta_{\Omega}+(n-1)\phi_t^{-1}d\phi_t\right)\right) \right|_{\Omega}^2 \phi_t^{-2n+n} \frac{\Omega^n}{n!} \\
&=& \int_X \left|d (\phi_t^{n-1} *_{\Omega} \theta_{\Omega}) + (n-1)d(\phi_t^{n-2} *_{\Omega} d\phi_t) \right|_{\Omega}^2 \phi_t^{-n} \frac{\Omega^n}{n!} \\
&=& \int_X \left|d (\phi_t^{n-1} *_{\Omega} \theta_{\Omega})\right|_\Omega^2 \phi_t^{-n} \frac{\Omega^n}{n!}
+ (n-1)^2\int_X \left|d(\phi_t^{n-2} *_{\Omega} d\phi_t) \right|^2_{\Omega} \phi_t^{-n} \frac{\Omega^n}{n!} \\
&& + 2(n-1)\int_X \left\langle d (\phi_t^{n-1} *_{\Omega} \theta_{\Omega}) \middle\vert d(\phi_t^{n-2} *_{\Omega} d\phi_t) \right\rangle_\Omega^2 \phi_t^{-n} \frac{\Omega^n}{n!} \\
&=&\int_X\left|d\left( (1+t(n-1)\dot\phi)*_\Omega\theta_\Omega\right)\right|_\Omega^2(1-tn\dot\phi)\frac{\Omega^n}{n!}\\
&&+(n-1)^2\int_X \left|d\left((1+t(n-2)\dot\phi)*_{\Omega} td\dot\phi)\right) \right|^2_{\Omega} (1-tn\dot\phi)\frac{\Omega^n}{n!} \\
&& + 2(n-1)\int_X \left\langle d\left( (1+t(n-1)\dot\phi) *_{\Omega} \theta_{\Omega})\right) \middle\vert d\left((1+t(n-2)\dot\phi) *_{\Omega} td\dot\phi\right) \right\rangle_\Omega (1-tn\dot\phi)\frac{\Omega^n}{n!}+o(t) \\
&=&\mathcal G(\Omega)-tn\int_X\left(d^{*_\Omega}\theta_\Omega \right)^2\dot\phi\frac{\Omega^n}{n!}+2t(n-1)\int_X\left\langle d*_\Omega\theta_\Omega\middle\vert d(\dot\phi *_\Omega\theta_\Omega)\right\rangle_\Omega\frac{\Omega^n}{n!}\\
&&+2t(n-1)\int_X\left\langle d*_\Omega\theta_\Omega\middle\vert d*_\Omega d\dot\phi\right\rangle_\Omega\frac{\Omega^n}{n!}+o(t).\\
\end{eqnarray*}
For the third term in the above equation we find:
\begin{eqnarray*}
\int_X\left\langle d*_\Omega\theta_\Omega\middle\vert d(\dot\phi *_\Omega\theta_\Omega)\right\rangle_\Omega\frac{\Omega^n}{n!}&=&\int_X\left\langle-*_\Omega d*_\Omega d*_\Omega\theta_\Omega\middle\vert *_\Omega\theta_\Omega\right\rangle_\Omega\dot\phi\frac{\Omega^n}{n!}\\
&=& \int_X\left\langle dd^{*_\Omega}\theta_\Omega\middle\vert \theta_\Omega\right\rangle_\Omega\dot\phi\frac{\Omega^n}{n!}
\end{eqnarray*}
while for the fourth term we find:
\begin{eqnarray*}
\int_X\left\langle d*_\Omega\theta_\Omega\middle\vert d*_\Omega d\dot\phi\right\rangle_\Omega\frac{\Omega^n}{n!}&=&-\int_X\left\langle d*_\Omega d*_\Omega\theta_\Omega\middle\vert d\dot\phi\right\rangle_\Omega\frac{\Omega^n}{n!}\\
&=&\int_X \Delta_\Omega d^{*_\Omega}\theta_\Omega\cdot\dot\phi\frac{\Omega^n}{n!}.
\end{eqnarray*}

Now if $\Omega$ is a critical point of $\mathcal G$, then $\left.\frac{d}{dt}\right\lfloor_{t=0}\mathcal G(\tilde\Omega_t)=0$ for any $\tilde \Omega_t=\phi_t\Omega$ defined by \eqref{varOm}. The above computations thus imply the following equation for $\Omega$:
\begin{equation}\label{critGG}
\Delta_\Omega d^{*_\Omega} \theta_{\Omega} +\langle dd^{*_\Omega}\theta_\Omega\vert\theta_\Omega\rangle_\Omega- \frac{n}{2(n-1)}(d^{*_\Omega} \theta_{\Omega})^2 = k,\ \ \ k\in\mathbb{R}.
\end{equation}
Let us denote by $f:=d^{*_\Omega}\theta_\Omega$. Integrating \eqref{critGG} over $X$ with respect to $\Omega$, we find:
\begin{align*}
k =&\int_X\Delta_\Omega f\frac{\Omega^n}{n!}+\int_Xf^2\frac{\Omega^n}{n!}-\frac{n}{2(n-1)}\int_Xf^2\frac{\Omega^n}{n!}\\
=&\frac{n-2}{2(n-1)}\int_Xf^2\frac{\Omega^n}{n!}\geq 0.
\end{align*}
Let now $x_{\text{min}} \in X$ be a minimum point of $f$, which exists by the compactness of $X$. It follows that $df_{x_{\text{min}}}=0$ and $\Delta_\Omega f(x_{\text{min}})\leq 0$. Moreover, as the integral of $f$ over $X$ vanishes, we have $f(x_{\text{min}})\leq 0$, with equality if and only if $f=0$ everywhere on $X$. We thus find, via \eqref{critGG}:
\begin{equation*}
0\leq k=\Delta_\Omega f(x_{\text{min}})-\frac{n}{2(n-1)}f^2(x_{\text{min}})\leq 0
\end{equation*}
implying that $\min f=f(x_{\text{min}})=0$, or also that $f$ vanishes everywhere. Thus the critical point $\Omega$ is indeed a Gauduchon metric.
\end{proof}

\begin{rmk}
By \cite[Th\'eor\`eme~1]{gauduchon-CRAS}, it follows that in any normalized conformal class $\{\Omega\}_1$, a critical point for $\mathcal G$ exists and is unique. 
\end{rmk}

\begin{cor}\label{cor:critG}
On a compact almost complex manifold $(X^{2n},J)$ with $n\geq 2$, a Hermitian metric $\Omega\in\mathcal H_1$ is critical for $\mathcal G$ on $\mathcal H_1$ if and only if $\Omega$ is a Gauduchon metric. 
\end{cor}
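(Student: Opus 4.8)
The plan is to deduce the corollary directly from Theorem~\ref{thm:critical-g} by comparing criticality on the full space $\mathcal H_1$ with criticality inside a single conformal class. The essential observation is that every normalized conformal class $\{\Omega\}_1$ sits inside $\mathcal H_1$, and the conformal variations $\tilde\Omega_t=\phi_t\Omega$ subject to $\int_X\dot\phi\frac{\Omega^n}{n!}=0$ used in the proof of Theorem~\ref{thm:critical-g} span a linear subspace $T_\Omega\{\Omega\}_1$ of the full tangent space $T_\Omega\mathcal H_1$. Hence every admissible deformation preserving the conformal class is in particular an admissible volume-preserving deformation of the metric.

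From this the two implications are immediate. For the ``only if'' direction, suppose $\Omega\in\mathcal H_1$ is critical for $\mathcal G$ on all of $\mathcal H_1$, so that $\left.\frac{d}{dt}\right\lfloor_{t=0}\mathcal G(\tilde\Omega_t)=0$ for every deformation with $\tilde\Omega_0=\Omega$. Restricting to the conformal deformations $\tilde\Omega_t=\phi_t\Omega$, we conclude that $\Omega$ is critical for the restriction $\mathcal G|_{\{\Omega\}_1}$, and Theorem~\ref{thm:critical-g} then forces $\Omega$ to be Gauduchon. For the ``if'' direction, suppose $\Omega$ is Gauduchon, i.e. $d^{*_\Omega}\theta_\Omega=0$; then $\mathcal G(\Omega)=\int_X(d^{*_\Omega}\theta_\Omega)^2\frac{\Omega^n}{n!}=0$. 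Since the integrand is a pointwise square, $\mathcal G\geq0$ on all of $\mathcal H_1$, so $\Omega$ realizes the global minimum of $\mathcal G$ and is therefore a critical point on $\mathcal H_1$.

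I do not anticipate a genuine obstacle: the corollary merely upgrades Theorem~\ref{thm:critical-g} from a fixed conformal class to the whole of $\mathcal H_1$, and both implications are formal once the nonnegativity of $\mathcal G$ and the subspace relation between the two families of variations are recorded. The single point deserving explicit mention is the direction of the restriction argument, namely that criticality on the larger space $\mathcal H_1$ descends to criticality on the subset $\{\Omega\}_1$ (and not conversely); stating this carefully avoids conflating the two notions of critical point, and is the only place where attentiveness is required.
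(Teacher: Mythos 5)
Your proof is correct and follows exactly the paper's argument: the forward direction restricts variations to the conformal class and invokes Theorem~\ref{thm:critical-g}, while the converse observes that Gauduchon metrics achieve the global minimum $\mathcal G=0$ of a nonnegative functional. You merely spell out the nonnegativity argument that the paper compresses into the word ``clearly.''
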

\begin{proof}
Clearly Gauduchon metrics are critical points of $\mathcal G$ on $\mathcal H_1$. Conversely, if $\Omega$ is a critical point for $\mathcal G$ on $\mathcal H_1$, then it is also a critical point for $\mathcal G$ on $\{\Omega\}_1$, hence $\Omega$ is Gauduchon.
\end{proof}

\section{The functional $\mathcal F$}
We consider now the following functional:
\begin{equation}\label{eq:f}
\mathcal F \colon \mathcal{H}_1 \to \mathbb R, \qquad
\mathcal F(\Omega) := \int_X |dJ\theta_\Omega|_\Omega^2 \frac{\Omega^n}{n!} = \int_X dJ\theta_\Omega \wedge *_\Omega dJ\theta_\Omega ,
\end{equation}
as well as its restriction to a conformal class in $\mathcal H_1$.

\begin{rmk}\label{rmk:motivation-f}
One motivation for studying the $L^2$-norm of the form $dJ\theta_\Omega$ is the following. Recall that, on a Hermitian manifold $(X,\Omega)$, there is a {\em canonical family} $\{\nabla^t\}_{t\in\mathbb R}$ of Hermitian connections, that we will call {\em distinguished connections} in the sense of \cite{libermann}, see also \cite{gauduchon-bumi}. It consists of the affine line through the {\em Chern connection} (namely, the unique Hermitian connection whose $(0,1)$-part is the Cauchy-Riemann operator $\overline\partial$) and the {\em Bismut connection} (namely, the unique Hermitian connection whose torsion is totally skew-symmetric), and it is characterized by prescribed torsion $T^{\nabla^t} = \frac{1}{4} (3t-1) (d^c \Omega)^+ - \frac{1}{4}(t+1) \mathfrak{M}(d^c\Omega)^+$, see \cite{gauduchon-bumi} for notation and more details.
We recall that the Ricci form of the Chern connection can be locally expressed as $\mathrm{Ric}^{Ch}(\Omega)\stackrel{\text{loc}}{=}\sqrt{-1}\,\overline\partial\partial\log\det \Omega$ and represents the first Bott-Chern class in Bott-Chern cohomology, $c_1^{BC}(X)=[\mathrm{Ric}^{Ch}(\Omega)]\in H^{1,1}_{BC}(X)\to H^2_{dR}(X)\ni c_1(X)$. By  \cite[Formula (8)]{grantcharov-grantcharov-poon}, the Ricci form of any distinguished connection $\nabla^t$ is expressed in terms of the Ricci form of the Chern connection as:
$$ \mathrm{Ric}^t = \mathrm{Ric}^{Ch}+\frac{1-t}{2} dJ\theta .$$
\end{rmk}

\begin{prop}\label{prop:critical-f}
On a compact almost complex $2n$-dimensional manifold $(X,J)$, the critical points $\Omega$ for the functional $\mathcal F$ in \eqref{eq:f} restricted to the conformal class $\{\Omega\}_1$ are described by the equation
\begin{equation}\label{eq:critical-F-ndim}
(n-2)|dJ\theta_\Omega|_\Omega^2-2(n-1)(dd^c)^{*_\Omega}\Delta_\Omega\Omega = k
\end{equation}
for $k$ constant.
\end{prop}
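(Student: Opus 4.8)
The plan is to carry out the first-variation computation along the conformal class exactly as in the proof of Theorem~\ref{thm:critical-g}, the point being that $dJ\theta_\Omega$ transforms additively under conformal rescaling. Writing $\tilde\Omega=\phi\cdot\Omega$ and using $\theta_{\tilde\Omega}=\theta_\Omega+(n-1)d\log\phi$ (recalled in the proof of Theorem~\ref{thm:critical-g}), together with the pointwise identity $J\,df=d^cf$ for any function $f$ — immediate from $d^c=-J^{-1}dJ$, from $J=\mathrm{id}$ on $0$-forms and $J^{-1}=-J$ on $1$-forms — I would first establish the clean transformation law
$$ dJ\theta_{\tilde\Omega}=dJ\theta_\Omega+(n-1)\,dd^c\log\phi. $$
This is the structural observation that makes $\mathcal F$ tractable in a conformal class, since the variation enters only through the $dd^c$ of a potential.

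Next, using $*_{\tilde\Omega}\lfloor_{\wedge^2X}=\phi^{n-2}*_\Omega\lfloor_{\wedge^2X}$ and $\tfrac{\tilde\Omega^n}{n!}=\phi^n\tfrac{\Omega^n}{n!}$, I would rewrite the functional over the fixed background $\Omega$ as $\mathcal F(\tilde\Omega)=\int_X \phi^{n-2}\,|dJ\theta_{\tilde\Omega}|_\Omega^2\,\tfrac{\Omega^n}{n!}$. Taking $\phi_t=1+t\dot\phi+o(t)$ with $\int_X\dot\phi\,\tfrac{\Omega^n}{n!}=0$, so that $\log\phi_t=t\dot\phi+o(t)$ gives $dd^c\log\phi_t=t\,dd^c\dot\phi+o(t)$ while $\phi_t^{n-2}=1+t(n-2)\dot\phi+o(t)$, I would differentiate at $t=0$ to obtain
$$ \left.\frac{d}{dt}\right\lfloor_{t=0}\mathcal F(\tilde\Omega_t)=\int_X\Big[(n-2)|dJ\theta_\Omega|_\Omega^2\,\dot\phi+2(n-1)\,\langle dJ\theta_\Omega\,\vert\,dd^c\dot\phi\rangle_\Omega\Big]\frac{\Omega^n}{n!}. $$
Integrating the second term by parts through the adjoint $(dd^c)^{*_\Omega}$ and requiring the result to vanish for all mean-zero $\dot\phi$, the usual argument forces the bracketed integrand to be constant, i.e.
$$ (n-2)|dJ\theta_\Omega|_\Omega^2+2(n-1)(dd^c)^{*_\Omega}dJ\theta_\Omega=k,\qquad k\in\RR. $$

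Finally, to recast this in the stated form I would use $\theta_\Omega=Jd^{*_\Omega}\Omega$ from \eqref{eq:theta-def}, which gives $d^{*_\Omega}\Omega=-J\theta_\Omega$ (again via $J^{-1}=-J$ on $1$-forms); differentiating yields $dd^{*_\Omega}\Omega=-dJ\theta_\Omega$, hence $dJ\theta_\Omega=d^{*_\Omega}d\Omega-\Delta_\Omega\Omega$. Since $(dd^c)^{*_\Omega}=(d^c)^{*_\Omega}d^{*_\Omega}$ and $(d^{*_\Omega})^2=0$, the contribution $(dd^c)^{*_\Omega}d^{*_\Omega}d\Omega$ vanishes, so $(dd^c)^{*_\Omega}dJ\theta_\Omega=-(dd^c)^{*_\Omega}\Delta_\Omega\Omega$, which produces exactly \eqref{eq:critical-F-ndim}. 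I expect the computation itself to be routine in view of the parallel argument for $\mathcal G$; the genuinely substantive steps are the two displayed identities, and the only real hazard is keeping the signs and adjoints of $J$ and $d^c$ straight. Note that nothing in the argument uses integrability of $J$, so the proposition holds in the full almost complex generality stated.
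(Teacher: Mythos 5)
Your proposal is correct and follows essentially the same route as the paper: the same first-variation computation over the fixed background metric using $\theta_{\tilde\Omega}=\theta_\Omega+(n-1)d\log\phi$ and the conformal weight $\phi^{n-2}$ on two-forms, leading to $(n-2)|dJ\theta_\Omega|_\Omega^2+2(n-1)(dd^c)^{*_\Omega}dJ\theta_\Omega=k$, and the same final substitution $dJ\theta_\Omega=-dd^{*_\Omega}\Omega$ combined with $(d^{*_\Omega})^2=0$ to replace $(dd^c)^{*_\Omega}dJ\theta_\Omega$ by $-(dd^c)^{*_\Omega}\Delta_\Omega\Omega$. The only (harmless) cosmetic differences are that you package the conformal change of $dJ\theta$ as $(n-1)dd^c\log\phi$ up front and pass through $d^{*_\Omega}d\Omega-\Delta_\Omega\Omega$ rather than writing $d^{*_\Omega}(dd^{*_\Omega}\Omega)=d^{*_\Omega}\Delta_\Omega\Omega$ directly.
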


\begin{proof}
We look for the conditions to be satisfied by $\Omega$ in order to be a critical point of $\mathcal F$ in its volume-normalized conformal class $\{\Omega\}_1 \subseteq \mathcal H_1$.
Consider a variation of $\Omega$ in its conformal class,
$$ \tilde\Omega_t = \phi_t \cdot \Omega, \qquad\phi_t=1+t\dot\phi+o(t) $$
for $|t|$ small enough and $\dot\phi\in\ce(X,\RR)$ satisfying, as before,
$$ \int_X \dot\phi \frac{\Omega^n}{n!}=0. $$

We compute:
\begin{eqnarray*}
\mathcal F(\tilde\Omega_t) &=& \int_X \left|dJ\theta_{\tilde\Omega_t}\right|_{\tilde\Omega_t}^2 \frac{\tilde\Omega_t^n}{n!}
= \int_X \left| dJ\theta_{\tilde\Omega_t} \right|_{\Omega}^2 \phi_t^{n-2} \frac{\Omega^n}{n!} \\
&=& \int_X \left| dJ\theta_\Omega+(n-1)d(\phi_t^{-1}Jd\phi_t) \right|_{\Omega}^2 \phi_t^{n-2} \frac{\Omega^n}{n!} \\
&=& \int_X \left| dJ\theta_\Omega+(n-1)d\phi_t^{-1}\wedge Jd\phi_t +(n-1) \phi_t^{-1} dJd\phi_t \right|_{\Omega}^2 \phi_t^{n-2} \frac{\Omega^n}{n!} \\
&=& \mathcal{F}(\Omega) + 2 (n-1)\int_X \left\langle dJ\theta_\Omega \middle\vert d\phi_t^{-1}\wedge Jd\phi_t + \phi_t^{-1} dJd\phi_t \right\rangle_{\Omega} \frac{\Omega^n}{n!} \\
&& + t (n-2) \int_X |dJ\theta_\Omega|_\Omega^2 \dot\phi \frac{\Omega^n}{n!} + o(t) \\
&=& \mathcal{F}(\Omega) + t \left( 2(n-1)\int_X \left\langle dJ\theta_\Omega \middle\vert  dJd\dot\phi \right\rangle_{\Omega}\frac{\Omega^n}{n!} + (n-2) \int_X |dJ\theta_\Omega|_\Omega^2 \dot\phi \frac{\Omega^n}{n!} \right) + o(t)  \\
\end{eqnarray*}
whence we get:
\begin{eqnarray*}
\left.\frac{d}{dt}\right\lfloor_{t=0} \mathcal F(\Omega_t) &=& 2(n-1)\int_X \left\langle dJ\theta_\Omega \middle\vert  dd^c\dot\phi \right\rangle_{\Omega}\frac{\Omega^n}{n!} + (n-2) \int_X |dJ\theta_\Omega|_\Omega^2 \dot\phi \frac{\Omega^n}{n!} \\
&=& \int_X \left( (n-2)|dJ\theta_\Omega|_\Omega^2+2(n-1)(dd^c)^{*_\Omega} dJ\theta_\Omega \right) \dot\phi \frac{\Omega^n}{n!} .
\end{eqnarray*}
Therefore $\Omega$ is a critical point for $\mathcal F$ restricted to its conformal class if and only if:
\begin{equation}\label{extrF} 
(n-2)|dJ\theta_\Omega|_\Omega^2+2(n-1)(dd^c)^{*_\Omega} dJ\theta_\Omega = k
\end{equation}
for $k$ constant.
We rewrite the second term in the last equation as follows, by using \eqref{eq:theta-def}:
\begin{eqnarray*}
(dd^c)^{*_\Omega}dJ\theta_\Omega &=&(d^c)^{*_\Omega}d^{*_\Omega}(-dd^{*_\Omega}\Omega)\\
&=&-(d^c)^{*_\Omega}d^{*_\Omega}\Delta_\Omega\Omega\\
&=&-(dd^c)^{*_\Omega}\Delta_\Omega\Omega.
\end{eqnarray*}

Therefore $\Omega$ is a critical point for $\mathcal F$ restricted to its conformal class if and only if:
$$ (n-2)|dJ\theta_\Omega|_\Omega^2-2(n-1)(dd^c)^{*_\Omega}\Delta_\Omega\Omega = k $$
for $k$ constant, proving the statement.
\end{proof}

\begin{cor}\label{cor:f-2}
On a compact almost complex $4$-dimensional manifold, the critical points $\Omega$ for the functional $\mathcal F$ in \eqref{eq:f} restricted to the conformal class $\{\Omega\}_1$ in $\mathcal{H}_1$ are described by the equation:
\begin{equation}\label{eq:critical-F-surface}
dd^c\Delta_\Omega\Omega=0.
\end{equation}
\end{cor}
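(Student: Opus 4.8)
My plan is to deduce the corollary directly from Proposition \ref{prop:critical-f} by specializing to $n=2$ and then trading the scalar equation for a top-degree one via Hodge duality. Setting $n=2$ in \eqref{eq:critical-F-ndim}, the term $(n-2)|dJ\theta_\Omega|_\Omega^2$ drops out and the critical-point condition becomes $-2(dd^c)^{*_\Omega}\Delta_\Omega\Omega=k$, i.e.\ the function $(dd^c)^{*_\Omega}\Delta_\Omega\Omega$ is constant. Since $(dd^c)^{*_\Omega}$ is the formal adjoint of the operator $dd^c\colon\ce(X,\RR)\to\wedge^2X$, which annihilates constants, the function $(dd^c)^{*_\Omega}\Delta_\Omega\Omega$ is $L^2$-orthogonal to the constants; integrating the equation over $X$ (recall $\mathrm{Vol}_\Omega(X)=1$) therefore forces $k=0$, so that the critical-point condition reduces to $(dd^c)^{*_\Omega}\Delta_\Omega\Omega=0$.

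It remains to prove that, in real dimension four, $(dd^c)^{*_\Omega}\Delta_\Omega\Omega=0$ is equivalent to \eqref{eq:critical-F-surface}, namely $dd^c\Delta_\Omega\Omega=0$. Here I would exploit two features special to complex dimension two. First, the fundamental form is self-dual, $*_\Omega\Omega=\Omega$; since the Hodge--de Rham Laplacian commutes with $*_\Omega$, it follows that $\gamma:=\Delta_\Omega\Omega$ is a self-dual $2$-form, $*_\Omega\gamma=\gamma$. Second, on a four-manifold the Hodge star intertwines the scalar operator $(dd^c)^{*_\Omega}\colon\wedge^2X\to\ce(X,\RR)$ with the top-degree operator $dd^c\colon\wedge^2X\to\wedge^4X$, up to sign, once one is allowed to insert $*_\Omega$ on the argument. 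Combining the intertwining identity with the self-duality $*_\Omega\gamma=\gamma$ identifies $(dd^c)^{*_\Omega}\Delta_\Omega\Omega$ with $\pm*_\Omega dd^c\Delta_\Omega\Omega$; as $*_\Omega$ is a pointwise isometry, its vanishing is equivalent to $dd^c\Delta_\Omega\Omega=0$, which is exactly \eqref{eq:critical-F-surface}. (One could instead postpone the normalization: granting the intertwining, the constant in $-2(dd^c)^{*_\Omega}\Delta_\Omega\Omega=k$ can be killed at the end, using that $dd^c\Delta_\Omega\Omega$ is exact and hence has vanishing integral.)

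The main obstacle is establishing this Hodge-star intertwining with the correct sign in the almost complex setting. I would start from the dimension-four codifferential formulas $d^{*_\Omega}=-*_\Omega d*_\Omega$ and $(d^c)^{*_\Omega}=-*_\Omega d^c*_\Omega$, the second of which I would justify from $d^c=-J^{-1}dJ$, the fact that $J$ is a pointwise isometry ($J^{*_\Omega}=J^{-1}$), and the commutation $*_\Omega J=J*_\Omega$, which holds because in complex dimension two $*_\Omega$ carries $\wedge^{p,q}X$ to $\wedge^{2-q,2-p}X$ with the same $J$-eigenvalue $\sqrt{-1}^{\,q-p}$. Expanding $(dd^c)^{*_\Omega}=(d^c)^{*_\Omega}d^{*_\Omega}$ on $2$-forms with these substitutions, and using $*_\Omega*_\Omega=-\mathrm{id}$ on odd-degree forms, reduces $(dd^c)^{*_\Omega}$ to $*_\Omega(d^cd)*_\Omega$ up to sign. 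The delicate point is then the passage between $d^cd$ and $dd^c$, which is transparent when $J$ is integrable (there $dd^c=2\sqrt{-1}\,\partial\overline\partial=-d^cd$); it is precisely the bookkeeping of these signs, rather than any analytic difficulty, that one must carry out carefully to land on $dd^c\Delta_\Omega\Omega$ and not interchange $dd^c$ with $d^cd$.
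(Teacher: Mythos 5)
Your argument follows the paper's own proof step for step: specialize \eqref{eq:critical-F-ndim} to $n=2$, kill the constant $k$ by noting that $(dd^c)^{*_\Omega}$ of anything is $L^2$-orthogonal to constants (the paper does this silently), and then use $*_\Omega\Omega=\Omega$, hence $*_\Omega\Delta_\Omega\Omega=\Delta_\Omega\Omega$, to trade the scalar equation $(dd^c)^{*_\Omega}\Delta_\Omega\Omega=0$ for a top-degree one. All of this is correct and is exactly what the paper compresses into one line.

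The step you defer as ``bookkeeping of signs'' is, however, the only real content of the corollary, and in the almost complex setting it is more than bookkeeping. Your expansion is right: $(dd^c)^{*_\Omega}=(d^c)^{*_\Omega}d^{*_\Omega}=\pm\,*_\Omega d^cd\,*_\Omega$ on $2$-forms of a $4$-manifold, so after inserting the self-duality of $\Delta_\Omega\Omega$ the critical-point condition literally becomes $d^cd(\Delta_\Omega\Omega)=0$. Converting this into \eqref{eq:critical-F-surface} requires $dd^c=-d^cd$ on the relevant forms, which holds when $J$ is integrable ($dd^c=2\sqrt{-1}\,\partial\overline\partial=-d^cd$) but not in general: writing $d=\mu+\partial+\overline\partial+\overline\mu$ one finds $dd^c+d^cd=2\sqrt{-1}\bigl((\mu+\overline\partial)^2-(\partial+\overline\mu)^2\bigr)$, which need not vanish, and $\Delta_\Omega\Omega$ need not be of pure type $(1,1)$, so the extra $J$ cannot simply be absorbed. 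You should therefore either prove the equivalence with \eqref{eq:critical-F-surface} under an integrability hypothesis (which is what the paper in effect does: its proof speaks of ``a complex surface'', and the follow-up Corollary~\ref{cor:critF-surface} explicitly assumes $J$ integrable), or leave the critical-point equation in the form $d^cd\Delta_\Omega\Omega=0$. To be fair, this is a point the paper's own one-line proof also elides, and you at least had the merit of flagging it; everything else in your proposal (the vanishing of $k$, the self-duality of $\Delta_\Omega\Omega$, the identity $(d^c)^{*_\Omega}=-*_\Omega d^c*_\Omega$ via $*_\Omega J=J*_\Omega$) is correct.
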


\begin{proof} By \eqref{eq:critical-F-ndim}, the critical points are described by the equation $(dd^c)^{*_\Omega}\Delta_\Omega\Omega=0$ and since on a complex surface $*_\Omega\Omega=\Omega$, this further reads as $dd^c\Delta_{\Omega}\Omega=0$.
\end{proof}

In the integrable case, this is further equivalent to:

\begin{cor}\label{cor:critF-surface}
On a compact complex surface, the critical points for the functional $\mathcal F$ restricted to a conformal class $\{\Omega\}_1$ in $\mathcal H_1$ are characterized by the equation:
\begin{equation}\label{eq:traceSurf}
dd^c(f\Omega)=0, \quad f=\Lambda_{\Omega}(-dJ\theta_\Omega)=|\theta_\Omega|_\Omega^2+d^{*_\Omega}\theta_\Omega.
\end{equation}
\end{cor}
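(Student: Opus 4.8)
The plan is to reduce to Corollary~\ref{cor:f-2} and then to prove the operator identity $dd^c\Delta_\Omega\Omega=dd^c(f\Omega)$ on a compact complex surface; granting this, the characterization follows at once, since by Corollary~\ref{cor:f-2} the critical points of $\mathcal F$ in $\{\Omega\}_1$ are precisely those $\Omega$ with $dd^c\Delta_\Omega\Omega=0$. The equality $f=\Lambda_\Omega(-dJ\theta_\Omega)=|\theta_\Omega|_\Omega^2+d^{*_\Omega}\theta_\Omega$ is the one recorded in the Introduction, so throughout I use only $f=\Lambda_\Omega(-dJ\theta_\Omega)$.

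First I would unwind $\Delta_\Omega\Omega=dd^{*_\Omega}\Omega+d^{*_\Omega}d\Omega$ using the two special features of the surface case, namely $*_\Omega\Omega=\Omega$ and $d\Omega=\theta_\Omega\wedge\Omega$. From \eqref{eq:theta-def} one gets $d^{*_\Omega}\Omega=J^{-1}\theta_\Omega=-J\theta_\Omega$, hence $dd^{*_\Omega}\Omega=-dJ\theta_\Omega$; I abbreviate $\delta:=-dJ\theta_\Omega$, so that $\Lambda_\Omega\delta=f$ and $\delta$ is $d$-exact. From $d^{*_\Omega}\Omega=-*_\Omega d\Omega$ we also read $*_\Omega d\Omega=J\theta_\Omega$, whence $d^{*_\Omega}d\Omega=-*_\Omega d*_\Omega d\Omega=-*_\Omega dJ\theta_\Omega=*_\Omega\delta$. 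Therefore $\Delta_\Omega\Omega=\delta+*_\Omega\delta$.

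The crux is the pointwise Hodge identity for $2$-forms $\beta$ on a Hermitian surface,
$$ *_\Omega\beta=(\Lambda_\Omega\beta)\,\Omega-J\beta, $$
which I would verify by comparing the eigenvalues of $*_\Omega$ and of $J$ on the summands $\wedge^{2,0}X$, $\wedge^{0,2}X$, the primitive $(1,1)$-forms, and $\RR\Omega$ of the decomposition $\beta=\beta^{2,0}+\beta^{0,2}+\beta^{1,1}_0+\tfrac12(\Lambda_\Omega\beta)\Omega$. Applied to $\beta=\delta$ it yields $*_\Omega\delta=f\Omega-J\delta$, and hence $\Delta_\Omega\Omega=\delta+f\Omega-J\delta$.

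It then remains to apply $dd^c$ and to discard the two terms besides $f\Omega$. Since $\delta$ is exact, it is in particular $d$-closed, so $dd^c\delta=-d^cd\delta=0$. For the remaining term, the definition $d^c=-J^{-1}dJ$ together with $J^{-1}=J$ on $2$-forms gives $J\delta=-J\,dJ\theta_\Omega=d^c\theta_\Omega$, whence $dd^c(J\delta)=dd^cd^c\theta_\Omega=d\,(d^c)^2\theta_\Omega=0$. This last vanishing is precisely where integrability is used, through $(d^c)^2=0$, and it is exactly the feature absent in the general almost-complex case. Altogether $dd^c\Delta_\Omega\Omega=dd^c(f\Omega)$, as required. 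I expect the only delicate points to be the surface Hodge identity above and the consistent tracking of $J$ versus $J^{-1}$ across different form degrees.
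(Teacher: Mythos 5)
Your argument is correct and is in substance the paper's own proof: the key identity you verify by eigenvalues, $*_\Omega\beta=(\Lambda_\Omega\beta)\Omega-J\beta$ on $2$-forms, is exactly Gauduchon's formula (50) quoted in the paper (note $J\beta=\beta^{1,1}-\beta^{(2,0)+(0,2)}$), and the two terms you discard via $dd^c\delta=0$ and $(d^c)^2=0$ correspond precisely to the paper's discarding of $dJ\theta_\Omega^{1,1}$ and $dJ\theta_\Omega^{(2,0)+(0,2)}$. The only real difference is cosmetic: you enter through Corollary~\ref{cor:f-2} and the decomposition $\Delta_\Omega\Omega=\delta+*_\Omega\delta$, whereas the paper returns to eq.~\eqref{extrF} and works with $*_\Omega dJ\theta_\Omega$ directly; since $dd^c\delta=0$ anyway, the extra summand $\delta$ costs nothing. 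One small omission: the equality $\Lambda_\Omega(-dJ\theta_\Omega)=|\theta_\Omega|^2_\Omega+d^{*_\Omega}\theta_\Omega$ is part of the statement \eqref{eq:traceSurf}, and the Introduction only asserts it --- the paper actually proves it inside this very proof (eq.~\eqref{tddJtheta}, by an integration by parts against an arbitrary test function $\phi$), so you should supply that short computation rather than cite it forward.
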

\begin{proof}

We find that eq.~\eqref{extrF} is equivalent to: 
\begin{eqnarray*}
 *_{\Omega}dd^c*_{\Omega}dJ\theta_{\Omega}=0 &\Leftrightarrow  &dd^c *_{\Omega} dJ\theta_{\Omega}=0.
\end{eqnarray*} 

On the other hand, by \cite[Formula (50)]{gauduchon}, we have:
\begin{equation}\label{trace2}
*_{\Omega}dJ\theta_{\Omega} = \Lambda_\Omega(dJ\theta_{\Omega})\Omega-dJ\theta_{\Omega}^{1,1}+dJ\theta_{\Omega}^{(2,0)+(0,2)}.
\end{equation}
Using that $\dim_{\mathbb R} X=4$, we obtain $dd^c(dJ\theta_{\Omega}^{(2,0)+(0,2)})=0$, and since $J$ is integrable we have $0=dd^c dJ\theta_{\Omega}=dd^c (dJ\theta_{\Omega}^{1,1})$. Thus we have shown that eq. \eqref{eq:critical-F-surface} is equivalent to:
\begin{equation*}
dd^c (\Lambda_\Omega(dJ\theta_{\Omega})\Omega)=0.
\end{equation*}

Next, we show that:
\begin{equation}\label{tddJtheta} 
\Lambda_{\Omega}(dJ\theta_{\Omega})=-|\theta_{\Omega}|^2-d^{*_{\Omega}}\theta_{\Omega}.
\end{equation}
Indeed, for any function $\phi\in\ce(X,\RR)$, we have:
\begin{eqnarray*}
\int_X\Lambda_{\Omega}(dJ\theta_{\Omega})\phi\frac{\Omega^2}{2}&=&\int_X\langle \phi dJ\theta_{\Omega}\vert\Omega\rangle_{\Omega}\frac{\Omega^2}{2}\\
&=&\int_X\langle d(\phi J\theta_{\Omega})-d\phi\wedge J\theta_{\Omega}\vert\Omega\rangle_{\Omega}\frac{\Omega^2}{2}\\
&=& \int_X \langle \phi J\theta_{\Omega}\vert-J\theta_{\Omega}\rangle_{\Omega}\frac{\Omega^2}{2}+\int_X\langle d\phi\vert \iota_{J\theta_{\Omega}^\sharp}\Omega\rangle_{\Omega}\frac{\Omega^2}{2}\\
&=&-\int_X\phi|\theta_{\Omega}|^2\frac{\Omega^2}{2}-\int_{X}\phi d^{*_{\Omega}}\theta_{\Omega}\frac{\Omega^2}{2}
\end{eqnarray*}
from which eq. \eqref{tddJtheta} follows. This concludes the proof.
\end{proof}

Let us note that eq. \eqref{tddJtheta} holds on any compact almost complex manifold.  In view of the above corollary, we introduce the following definition:

\begin{defi}
Let $(X,J)$ be an almost complex manifold and let $\Omega$ be an almost Hermitian metric. We call $\Omega$ \emph{distinguished} if it satisfies the equation:
\begin{equation}\label{eq:CritFf}
dd^c(f^{n-1}_{\Omega}\Omega^{n-1})=0, \quad \text{ where } f_{\Omega} :=\Lambda_{\Omega}(-dJ\theta_\Omega)=|\theta_\Omega|_{\Omega}^2+d^{*_\Omega}\theta_\Omega.
\end{equation}
\end{defi}

Like Gauduchon metrics, these metrics can be seen as canonical representatives in a given normalized conformal class :

\begin{thm}\label{thm:criticalF}
Let $(X,J)$ be a compact almost complex manifold of real dimension $2n>2$, and let $\{\Omega\}_1$ be a conformal class of normalized almost Hermitian metrics. Then there exists and is unique a distinguished metric $\Omega\in\{\Omega\}_1$. This metric is either balanced, i.e. $\theta_\Omega=0$, or is characterized by the property that $f_{\Omega}:=|\theta_\Omega|^2_\Omega+d^{*_\Omega}\theta_\Omega$ is strictly positive on $X$ and the metric $f_{\Omega}\Omega$ is Gauduchon.
\end{thm}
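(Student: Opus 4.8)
The plan is to reduce the distinguished equation to a \emph{linear} elliptic problem by working in the Gauduchon gauge. First I would invoke Gauduchon's theorem \cite[Th\'eor\`eme~1]{gauduchon-CRAS} (this is exactly where the hypothesis $2n>2$ enters) to fix the Gauduchon representative $\Omega_0\in\{\Omega\}_1$, so that $d^{*_{\Omega_0}}\theta_{\Omega_0}=0$, whence $f_{\Omega_0}=|\theta_{\Omega_0}|^2_{\Omega_0}$ and $dd^c\Omega_0^{n-1}=0$. Every metric of the class is then $\Omega=\psi\cdot\Omega_0$ with $\psi\in\ce(X,\RR)$, $\psi>0$. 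The crucial observation is that $f_\Omega\cdot\psi$ is an \emph{affine} function of $w:=\log\psi$: using the conformal scaling $\Lambda_{\psi\Omega_0}=\psi^{-1}\Lambda_{\Omega_0}$ on $2$-forms, the transformation law $\theta_\Omega=\theta_{\Omega_0}+(n-1)d\log\psi$, and $dJd\log\psi=dd^c\log\psi$, the identity \eqref{tddJtheta} gives
\[
f_\Omega\,\psi \;=\; -\Lambda_{\Omega_0}\big(dJ\theta_\Omega\big) \;=\; |\theta_{\Omega_0}|^2_{\Omega_0}-(n-1)\,\Lambda_{\Omega_0}dd^c w .
\]
(The apparent quasilinear gradient term one sees after the naive substitution $u=\psi^{n-1}$ cancels, leaving this genuinely affine expression.)

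Next I would reduce the distinguished condition to this quantity being constant. Writing $g:=f_\Omega\psi$, one has $f_\Omega^{n-1}\Omega^{n-1}=g^{n-1}\Omega_0^{n-1}$, so the defining equation \eqref{eq:CritFf} reads $P_0(g^{n-1})=0$, where $P_0$ is Gauduchon's operator of $\Omega_0$, defined by $dd^c(\,\cdot\,\Omega_0^{n-1})=P_0(\cdot)\tfrac{\Omega_0^n}{n!}$. Since $\Omega_0$ is Gauduchon, $P_0$ is second order elliptic with \emph{no} zeroth order term, so by the maximum principle $\ker P_0=\RR$; hence $g^{n-1}$, and therefore $g$ (using connectedness of $X$), is constant. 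Thus $\Omega=\psi\Omega_0$ is distinguished if and only if $w=\log\psi$ solves the \emph{linear} elliptic equation
\[
(n-1)\,\Lambda_{\Omega_0}dd^c w \;=\; |\theta_{\Omega_0}|^2_{\Omega_0}-c ,\qquad c\in\RR .
\]

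Existence and uniqueness then follow from Fredholm theory for $D_0:=\Lambda_{\Omega_0}dd^c$, which is elliptic of second order with no zeroth order term, so $\ker D_0=\RR$ and, by the maximum principle applied to its $\Omega_0$-adjoint $D_0^{*}$, $\ker D_0^{*}$ is spanned by a positive density $\rho$. The solvability condition for the displayed equation (namely orthogonality of the right-hand side to $\rho$) fixes $c=\big(\int_X|\theta_{\Omega_0}|^2_{\Omega_0}\,\rho\big)\big/\big(\int_X\rho\big)\ge0$ uniquely, produces $w$ unique up to an additive constant, and the volume normalization $\int_X\psi^n\tfrac{\Omega_0^n}{n!}=1$ removes the residual scaling freedom, yielding a unique distinguished $\Omega\in\{\Omega\}_1$. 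Reversing the chain, any such $w$ gives $g\equiv c$, hence $P_0(g^{n-1})=0$, hence distinguished. I expect this PDE input --- establishing $\ker P_0=\RR$ and the positivity of the adjoint kernel of $D_0$ --- to be the main obstacle, although it is standard and of exactly the type underlying Gauduchon's theorem itself.

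Finally I would read off the dichotomy. If $\Omega$ is distinguished with constant $c$, then $f_\Omega=c\,\psi^{-1}$, and since $\int_X d^{*_\Omega}\theta_\Omega\,\tfrac{\Omega^n}{n!}=0$,
\[
c\int_X \psi^{n-1}\,\frac{\Omega_0^n}{n!} \;=\; \int_X f_\Omega\,\frac{\Omega^n}{n!} \;=\; \int_X |\theta_\Omega|^2_\Omega\,\frac{\Omega^n}{n!}\;\ge\;0 .
\]
Hence $c\ge0$, with $c=0$ exactly when $\theta_\Omega\equiv0$, i.e. $\Omega$ is balanced; and when $c>0$ one gets $f_\Omega=c\,\psi^{-1}>0$ on $X$ and $f_\Omega\Omega=c\,\Omega_0$, which is Gauduchon as a positive constant multiple of $\Omega_0$. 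This is precisely the asserted characterization.
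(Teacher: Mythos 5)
Your proof is correct, and its existence--uniqueness core is the same as the paper's: after fixing the Gauduchon representative $\Omega_0$ via \cite[Th\'eor\`eme~1]{gauduchon-CRAS}, your linear equation $(n-1)\Lambda_{\Omega_0}dd^cw=|\theta_0|^2_{\Omega_0}-c$ is, through \eqref{eq:ddc}, exactly the paper's eq.~\eqref{eq:phi}, and both arguments conclude by the Fredholm alternative for the same operator ($L_0=-D_0$), followed by the same normalization to kill the additive constant. Where you genuinely diverge is the characterization step. The paper shows that \eqref{eq:CritFf} means $f_\Omega^{n-1}\in\ker U_\Omega$ for the operator \eqref{eqCritF2} attached to the metric $\Omega$ itself; since $U_\Omega$ carries the zeroth-order term $q\,d^{*_\Omega}\theta_\Omega$, it must invoke the harder part of Gauduchon's analysis (Hopf's maximum principle \cite{hopf} yielding a one-dimensional kernel spanned by a function of constant sign). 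Your identity $f_\Omega^{n-1}\Omega^{n-1}=(f_\Omega\psi)^{n-1}\Omega_0^{n-1}$ instead converts \eqref{eq:CritFf} into $U_{\Omega_0}(g^{n-1})=0$ on the fixed Gauduchon background, where the zeroth-order term vanishes and only the elementary strong maximum principle (or the paper's one-line integration by parts) is needed to conclude that $g$ is constant; the dichotomy then falls out of $f_\Omega=c\,\psi^{-1}$ exactly as you state. This is a mild but real simplification: the constant-sign alternative for $f_\Omega$ is obtained for free rather than from the Krein--Rutman-type statement. One small remark: since $\Omega_0$ is Gauduchon, $D_0^*$ also has no zeroth-order term, so your positive density $\rho$ is simply a constant (the paper verifies $\ker L_0=\ker L_0^*=\RR$ directly), your formula for $c$ reduces to $k_0=\int_X|\theta_0|^2_{\Omega_0}\tfrac{\Omega_0^n}{n!}$, and the PDE input you flag as the main potential obstacle is in fact the easy case of the maximum principle.
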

\begin{proof}
Let us fix $\Omega\in\{\Omega\}_1$ and take any smooth function $q\in\ce(X)$. First, we compute:
\begin{eqnarray*}
dd^c(q\Omega^{n-1})&=&d(d^cq\wedge\Omega^{n-1}+qJ\theta_{\Omega}\wedge\Omega^{n-1})\\
&=&dd^cq\wedge\Omega^{n-1}-d^cq\wedge\theta\wedge\Omega^{n-1}+dq\wedge J\theta_{\Omega}\wedge\Omega^{n-1}\\
&&+qdJ\theta_{\Omega}\wedge\Omega^{n-1}+q\theta_{\Omega}\wedge J\theta_{\Omega}\wedge\Omega^{n-1}\\
&=&\Lambda_{\Omega}\left(dd^cq-d^cq\wedge\theta_{\Omega}+dq\wedge J\theta_{\Omega}+qdJ\theta_{\Omega}+q\theta_{\Omega}\wedge J\theta_{\Omega}\right)\frac{\Omega^n}{n}.
\end{eqnarray*}

Using that for any function $q\in\ce(X)$, 
\begin{equation}\label{eq:ddc}
\Lambda_{\Omega}(dd^cq)=-\Delta_{\Omega} q-\langle dq |\theta_{\Omega}\rangle_{\Omega}
\end{equation}
which is proven in the same manner as eq. \eqref{tddJtheta}, we find that $dd^c(q\Omega^{n-1})=0$ is equivalent to:
\begin{eqnarray*} 
0&=&\Lambda_{\Omega}\left(dd^cq-d^cq\wedge\theta_{\Omega}+dq\wedge J\theta_{\Omega}+qdJ\theta_{\Omega}+q\theta_{\Omega}\wedge J\theta_{\Omega}\right)\\
&=&-\Delta_{\Omega} q-\langle dq\vert \theta_{\Omega}\rangle_{\Omega}+2\langle dq\vert \theta_{\Omega}\rangle_{\Omega}-qf_{\Omega}+q|\theta_{\Omega}|_{\Omega}^2
\end{eqnarray*}
or also to:
\begin{equation}\label{eqCritF2}
U_\Omega q:=\Delta_{\Omega} q-\langle dq\vert \theta_{\Omega}\rangle_{\Omega}+qd^{*_{\Omega}}\theta_{\Omega}=0
\end{equation}
meaning that $q$ is in the kernel of the elliptic operator $U_\Omega$.

However, by the proof of \cite[Théorème at page~502]{gauduchon} using the Hopf maximum principle \cite{hopf}, the kernel of the operator $U_\Omega$ is one dimensional and any $0\neq q\in \ker U_\Omega$ is either strictly positive or strictly negative. Furthermore, for a strictly positive function $q$ on $X$, $q\in\ker U_\Omega$ if and only if $q^{1/(n-1)}\Omega$ is Gauduchon. 

We infer that if eq.~\eqref{eqCritF2} is satisfied for $q=f^{n-1}_\Omega$, then either $f_{\Omega}=0$ and $\Omega$ is balanced, or $f_\Omega\neq 0$. In the second case, $f_{\Omega}$ has constant sign and since $\int_Xf_{\Omega}\frac{\Omega^n}{n!}>0$, then $f_{\Omega}>0$ everywhere and $f_{\Omega}\Omega$ is Gauduchon. Conversely, if $f_{\Omega}=0$,
 or $f_{\Omega}>0$ and $f_{\Omega}\Omega$ is Gauduchon, then eq. \eqref{eq:CritFf} holds. This concludes the proof of the last statement in the theorem.

In order to show existence and uniqueness of solutions to eq.~\eqref{eq:CritFf}  in a given normalized conformal class, let us fix $\Omega_0$ a Gauduchon metric on $(X,J)$. By the above, we need to show that there exists a unique function $\varphi\in\ce(X,\RR)$ and constant $k\in\RR$, $k\geq 0$ such that $\Omega:=e^{\varphi}\Omega_0 \in \{\Omega_0\}_1$ and $e^{\varphi}f_{\Omega}= k$.

Using that $\theta_{\Omega}=\theta_0+(n-1)d\varphi$ and that for any $\alpha\in\wedge^1X$, we have:
\begin{equation*}\label{eq:confd}
d^{*_{\Omega}}\alpha=e^{-\varphi}\left(d^{*_{\Omega_0}}\alpha-(n-1)\langle d\varphi\vert \alpha\rangle_{\Omega_0}\right),
\end{equation*}
we compute:
\begin{eqnarray*}
f_{\Omega} &=&e^{-\varphi}( (n-1)\Delta_{\Omega_0}\varphi-(n-1)\langle d\varphi\vert\theta_0\rangle_{\Omega_0}-(n-1)^2|d\varphi|^2_{\Omega_0}\\
&&+|\theta_0|^2_{\Omega_0}+(n-1)^2|d\varphi|^2_{\Omega_0}+2(n-1)\langle d\varphi\vert\theta_0\rangle_{\Omega_0})\\
&=&e^{-\varphi}\left( (n-1)\Delta_{\Omega_0}\varphi+(n-1)\langle d\varphi\vert\theta_0\rangle_{\Omega_0}+|\theta_0|^2_{\Omega_0}\right).
\end{eqnarray*}
Hence $f_\Omega=ke^{-\varphi}$ is equivalent to:
\begin{equation}\label{eq:phi}
\Delta_{\Omega_0}\varphi+\langle d\varphi\vert\theta_0\rangle_{\Omega_0}=\frac{1}{n-1}(k-|\theta_0|^2_{\Omega_0}).
\end{equation}

Consider the linear elliptic operator $L_0$ and its $L^2$-formal adjoint $L_0^*$ with respect to $\Omega_0$, acting on $\phi\in\ce(X)$ by:
\begin{eqnarray*}
L_0\phi &=&\Delta_{\Omega_0}\phi+\langle d\phi\vert\theta_0\rangle_{\Omega_0}\\
L_0^* \phi &=&\Delta_{\Omega_0}\phi-\langle d\phi\vert\theta_0\rangle_{\Omega_0}.
\end{eqnarray*}
The theory of linear elliptic differential operators tells us that we have an $L^2$-orthogonal decomposition:
\begin{equation}\label{eq:l2dec}
\ce(X)=\ker L_0^*\oplus L_0(\ce(X)).
\end{equation}

Note that  $\ker L_0=\ker L_0^*=\RR\subset\ce(X)$. Indeed, if $\phi\in\ker L_0$ or $\phi\in\ker L_0^*$ then:
\begin{eqnarray*}
\int_X|d\phi|^2_{\Omega_0}\frac{\Omega_0^n}{n!}&=&\int_X\Delta_{\Omega_0} \phi\cdot \phi\frac{\Omega_0^n}{n!}\\
&=&\pm\int_X\langle d\phi\vert\theta_0\rangle_{\Omega_0} \phi\frac{\Omega_0^n}{n!}\\
&=&\pm\frac{1}{2}\int_X \phi^2d^{*_{\Omega_0}}\theta_0\frac{\Omega_0^n}{n!}=0
\end{eqnarray*}
hence $d\phi=0$ and $\phi$ is constant. This, together with \eqref{eq:l2dec}, implies that a function $h\in\ce(X)$ belongs to $L_0(\ce(X))$ if and only if $\int_Xh\frac{\Omega^n_0}{n!}=0$.

In particular, if we let:
\begin{equation*}
k_0:=\int_X|\theta_0|^2_{\Omega_0}\frac{\Omega^n_0}{n!}\geq 0 
\end{equation*}
then we find that $h:=\frac{1}{n-1}(k-|\theta_0|^2_{\Omega_0})\in L_0(\ce(X))$ if and only if $k=k_0$. Hence eq.~\eqref{eq:phi} admits a solution $\varphi\in\ce(X)$, unique up to addition by a constant, if and only if $k=k_0$.

Finally, it follows that the solution $\Omega$ to eq. \eqref{eq:CritFf} is unique up to multiplication by a constant, and thus is unique in $\{\Omega\}_1$. This concludes the proof.
\end{proof}

In particular, we obtain:
\begin{cor}\label{cor:minF}
Given a compact complex surface $(X,J)$ and a conformal class of normalized Hermitian metrics $\{\Omega\}_1$, there exists and is unique a critical metric $\Omega\in\{\Omega\}_1$ for the functional $\mathcal F$ restricted to $\{\Omega\}_1$. $\Omega$ is a distinguished metric and $\mathcal F (\Omega)$ is an absolute minimum for $\mathcal F$ on $\{\Omega\}_1$. 
\end{cor}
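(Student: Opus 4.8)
The existence, uniqueness and distinguishedness of the critical metric are already in hand: by Corollary~\ref{cor:critF-surface} a metric $\Omega\in\{\Omega\}_1$ is critical for $\mathcal F$ restricted to $\{\Omega\}_1$ precisely when $dd^c(f_\Omega\Omega)=0$, that is, precisely when $\Omega$ is distinguished, and Theorem~\ref{thm:criticalF} applied with $n=2$ (so $2n=4>2$) provides a unique distinguished metric in $\{\Omega\}_1$. Hence the only genuinely new content is that this metric is an absolute minimizer, and this is what I would establish by splitting $\mathcal F$ into a conformally invariant part plus a manifestly nonnegative part that is minimized exactly at the distinguished metric.

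The first step is to exploit that $dJ\theta_\Omega=d(J\theta_\Omega)$ is exact. Since $X$ is a compact oriented $4$-manifold, Stokes' theorem gives $\int_X dJ\theta_\Omega\wedge dJ\theta_\Omega=0$, so in the self-dual/anti-self-dual splitting $\int_X|(dJ\theta_\Omega)^+|_\Omega^2\frac{\Omega^2}{2}=\int_X|(dJ\theta_\Omega)^-|_\Omega^2\frac{\Omega^2}{2}$. Writing the pointwise type decomposition $dJ\theta_\Omega=a\,\Omega+\beta+\gamma$, with trace part $a=\tfrac12\Lambda_\Omega(dJ\theta_\Omega)=-\tfrac12 f_\Omega$, primitive (anti-self-dual) $(1,1)$-part $\beta$, and self-dual remainder $\gamma:=(dJ\theta_\Omega)^{(2,0)+(0,2)}$, the orthogonality of the three pieces together with $|\Omega|_\Omega^2=2$ then yields
\begin{equation*}
\mathcal F(\Omega)=\int_X f_\Omega^2\,\frac{\Omega^2}{2}+2\int_X|\gamma|_\Omega^2\,\frac{\Omega^2}{2}.
\end{equation*}

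Next I would show that the $\gamma$-term is a conformal invariant, hence constant on $\{\Omega\}_1$. For $\tilde\Omega=e^\varphi\Omega$ one has $\theta_{\tilde\Omega}=\theta_\Omega+d\varphi$, so $dJ\theta_{\tilde\Omega}=dJ\theta_\Omega+dd^c\varphi$; as $J$ is integrable, $dd^c\varphi$ is of type $(1,1)$, so the $(2,0)+(0,2)$-component $\gamma$ is unchanged, and since the Hodge star on $2$-forms in real dimension $4$ is conformally invariant, $|\gamma|_{\tilde\Omega}^2\frac{\tilde\Omega^2}{2}=|\gamma|_\Omega^2\frac{\Omega^2}{2}$ pointwise. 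Thus minimizing $\mathcal F$ reduces to minimizing $\int_X f_\Omega^2\frac{\Omega^2}{2}$. Fixing a volume-one Gauduchon representative $\Omega_0$ and writing $\Omega=e^\varphi\Omega_0$, the computation of $f_\Omega$ in the proof of Theorem~\ref{thm:criticalF} gives $\int_X f_\Omega^2\frac{\Omega^2}{2}=\int_X g^2\frac{\Omega_0^2}{2}$ with $g:=\Delta_{\Omega_0}\varphi+\langle d\varphi\vert\theta_0\rangle_{\Omega_0}+|\theta_0|_{\Omega_0}^2$; since $d^{*_{\Omega_0}}\theta_0=0$, integration by parts shows that $\int_X g\,\frac{\Omega_0^2}{2}=\int_X|\theta_0|_{\Omega_0}^2\frac{\Omega_0^2}{2}=:k_0$ is independent of $\varphi$. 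The Cauchy--Schwarz inequality then gives $\int_X g^2\frac{\Omega_0^2}{2}\geq k_0^2$, with equality if and only if $g$ is constant, i.e. if and only if $\varphi$ solves \eqref{eq:phi}, i.e. exactly for the distinguished metric; this identifies it as the unique absolute minimizer.

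I expect the decomposition of the second paragraph to be the main obstacle: one must correctly identify the trace part of $dJ\theta_\Omega$ with $-\tfrac12 f_\Omega$ and, above all, verify that the $(2,0)+(0,2)$-contribution is a genuine conformal invariant. This is exactly where both the integrability of $J$ (which forces $dd^c\varphi$ to be of type $(1,1)$) and the special conformal behaviour of middle-degree forms in real dimension $4$ enter. Once $\mathcal F$ is written as this invariant plus $\int_X f_\Omega^2\frac{\Omega^2}{2}$, the remaining minimization is a routine Cauchy--Schwarz estimate anchored at the Gauduchon metric.
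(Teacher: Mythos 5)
Your argument is correct, but it takes a genuinely different route from the paper's. The paper handles the minimality by a direct variational expansion: for $\tilde\Omega_t=\phi_t\Omega$ with $\phi_t=1+t\phi_1+t^2\phi_2+o(t^2)$ one has $dJ\theta_{\tilde\Omega_t}=dJ\theta_\Omega+t\,dd^c\phi_1+t^2dd^c(\phi_2-\tfrac12\phi_1^2)+o(t^2)$, the first-order cross term dies because $(dd^c)^{*_\Omega}dJ\theta_\Omega=0$ at a critical point, and what survives is $t^2\int_X|dd^c\phi_1|^2_\Omega\frac{\Omega^2}{2}+o(t^2)\geq 0$. (As written this is a second-variation/local statement; the global version follows from the same cancellation applied to the exact identity $\mathcal F(e^\psi\Omega)=\mathcal F(\Omega)+\int_X|dd^c\psi|_\Omega^2\frac{\Omega^2}{2}$, using the conformal invariance of the $L^2$-norm on $2$-forms in real dimension $4$.) You instead split $\mathcal F$ structurally: the signature identity $\int_X dJ\theta\wedge dJ\theta=0$ plus the pointwise decomposition into $-\tfrac12 f_\Omega\,\Omega$, a primitive anti-self-dual $(1,1)$-part and the self-dual $(2,0)+(0,2)$-part $\gamma$ gives $\mathcal F(\Omega)=\int_X f_\Omega^2\frac{\Omega^2}{2}+2\int_X|\gamma|^2_\Omega\frac{\Omega^2}{2}$, with the $\gamma$-term conformally invariant (here both the integrability of $J$ and dimension $4$ enter, exactly as you flag), so minimization reduces to a Cauchy--Schwarz estimate for $\int_X f_\Omega^2$ anchored at the Gauduchon representative. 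I checked the details: the trace identification $\tfrac12\Lambda_\Omega(dJ\theta_\Omega)=-\tfrac12 f_\Omega$ via eq.~\eqref{tddJtheta}, the Weil identities behind the SD/ASD splitting, and the equality case of Cauchy--Schwarz reproducing eq.~\eqref{eq:phi} with $k=k_0$ all work. Your route is longer and uses more machinery, but it buys an explicit value for the minimum, namely $\bigl(\int_X|\theta_{\Omega_0}|^2_{\Omega_0}\frac{\Omega_0^2}{2}\bigr)^2+2\int_X|\gamma|^2_{\Omega_0}\frac{\Omega_0^2}{2}$ for the Gauduchon representative $\Omega_0$, and it gives strict inequality off the distinguished metric, hence uniqueness of the minimizer without re-invoking Theorem~\ref{thm:criticalF}; the paper's argument is shorter and stays entirely within the variational framework already set up in Proposition~\ref{prop:critical-f}.
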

\begin{proof}
The first assertion is clear. Suppose that $\Omega$ is a distinguished metric and let $\Omega_t=\phi_t\Omega\in\{\Omega\}_1$ be a variation of $\Omega$, with:
\begin{equation*}
\phi_t=1+t\phi_1+t^2\phi_2+o(t^2).
\end{equation*}
The normalization condition implies that:
\begin{equation*}
\int_X\phi_1\frac{\Omega^2}{2}=\int_X(\phi_1^2+2\phi_2)\frac{\Omega^2}{2}=0.
\end{equation*}
We have:
\begin{eqnarray*}
dJ\theta_{\Omega_t}&=&dJ\theta_{\Omega}-\phi_t^{-2}d\phi_t\wedge d^c\phi_t+\phi_t^{-1}dd^c\phi_t\\
&=&dJ\theta_{\Omega}+tdd^c\phi_1+t^2dd^c\left(\phi_2-\frac{\phi^2_1}{2}\right)+o(t^2)
\end{eqnarray*}  
from which we deduce, using that $(dd^c)^*dJ\theta_\Omega=0$ by eq.~\eqref{extrF}:
\begin{eqnarray*}
\mathcal F(\Omega_t)-\mathcal F(\Omega) &=&\int_X\left( t^2|dd^c\phi_1|^2_\Omega+2\langle dJ\theta_\Omega\vert t dd^c\phi_1+t^2dd^c( \phi_2-\frac{\phi_1^2}{2})\rangle_\Omega \right)\frac{\Omega^2}{2}+o(t^2)\\
&=&t^2\int_X|dd^c\phi_1|_\Omega^2\frac{\Omega^2}{2}+o(t^2)
\end{eqnarray*}
which is positive for $|t|$ small enough. Thus $\mathcal F(\Omega)$ is a minimum of $\mathcal F$ on $\{\Omega\}_1$. 
\end{proof}

Note that, in general, the distinguished metrics are not Gauduchon:

\begin{cor}\label{cor:f-2-vaisman2}
Let $X$ be a compact almost complex manifold and let $\{\Omega\}_1$ be a normalized conformal class of Hermitian metrics. Then a Gauduchon metric $\Omega\in\{\Omega\}_1$ is distinguished if and only if $|\theta_\Omega|_\Omega$ is constant. In particular, any Vaisman metric (i.e. lcK metric with $\nabla^g\theta_\Omega=0$) is distinguished.
\end{cor}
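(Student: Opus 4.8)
The plan is to exploit the characterization of distinguished metrics extracted in the proof of Theorem~\ref{thm:criticalF}: namely, $\Omega$ is distinguished precisely when $f_\Omega^{n-1}$ lies in the kernel of the elliptic operator
\[
U_\Omega q = \Delta_\Omega q - \langle dq \vert \theta_\Omega\rangle_\Omega + q\, d^{*_\Omega}\theta_\Omega
\]
of eq.~\eqref{eqCritF2}, together with the fact recalled there (from \cite[Théorème at page~502]{gauduchon}) that $\ker U_\Omega$ is one-dimensional once $2n\geq 4$.

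First I would specialize to a Gauduchon metric $\Omega$, where $d^{*_\Omega}\theta_\Omega=0$ and hence $f_\Omega=|\theta_\Omega|_\Omega^2$. The key observation is that the constant function $1$ then lies in $\ker U_\Omega$: indeed $U_\Omega 1 = d^{*_\Omega}\theta_\Omega = 0$, or equivalently $1^{1/(n-1)}\Omega=\Omega$ is Gauduchon by hypothesis. Since $\ker U_\Omega$ is one-dimensional, it must coincide with the line $\RR$ of constant functions.

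It follows that $\Omega$ is distinguished if and only if $f_\Omega^{n-1}=|\theta_\Omega|_\Omega^{2(n-1)}$ is constant. As $f_\Omega=|\theta_\Omega|_\Omega^2\geq 0$ and $t\mapsto t^{n-1}$ is injective on $[0,\infty)$, this holds if and only if $|\theta_\Omega|_\Omega$ is constant, which is the desired equivalence.

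For the last assertion, I would use that a Vaisman metric has parallel Lee form, $\nabla^g\theta_\Omega=0$. Parallelism gives at once $d^{*_\Omega}\theta_\Omega=-\tr_\Omega\nabla^g\theta_\Omega=0$, so that $\Omega$ is Gauduchon, and also $d|\theta_\Omega|_\Omega^2=2\langle\nabla^g\theta_\Omega,\theta_\Omega\rangle=0$, so that $|\theta_\Omega|_\Omega$ is constant; the equivalence just proved then shows $\Omega$ is distinguished. There is no substantial obstacle here: the only delicate point is that the whole argument hinges on the one-dimensionality of $\ker U_\Omega$, valid under the standing assumption $2n\geq 4$, and on the elementary but decisive remark that a metric is Gauduchon exactly when the constants sit in that kernel.
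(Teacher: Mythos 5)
Your argument is correct and is essentially the paper's own: the paper invokes the statement of Theorem~\ref{thm:criticalF} (distinguished iff balanced or $f_\Omega\Omega$ Gauduchon) together with uniqueness of Gauduchon representatives, which is exactly the one-dimensionality of $\ker U_\Omega$ that you use directly. Your explicit verification of the Vaisman case ($\nabla^g\theta_\Omega=0$ forces $d^{*_\Omega}\theta_\Omega=0$ and $|\theta_\Omega|_\Omega$ constant) is a welcome addition that the paper leaves implicit.
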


\begin{proof}
By Theorem~\ref{thm:criticalF}, the Gauduchon metric $\Omega$ is distinguished if and only if $\Omega$ is balanced, so $|\theta_\Omega|_\Omega=0$, or if $|\theta_\Omega|^2_{\Omega}\Omega$ is Gauduchon. This is further equivalent to $|\theta_\Omega|_\Omega^2$ being constant, by the uniqueness of normalized Gauduchon metrics in a conformal class.
\end{proof}

\begin{cor}\label{gauduchon}
Let $X$ be a compact almost complex manifold and let $\{\Omega\}_1$ be a normalized conformal class of Hermitian metrics. If $\Omega \in \{\Omega\}_1$  has constant $|\theta_\Omega|_\Omega$, then $\Omega$  is distinguished if and only if $\Omega$ is Gauduchon.
\end{cor}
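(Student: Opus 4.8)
The statement is an equivalence under the standing hypothesis that $|\theta_\Omega|_\Omega$ is constant, and the plan is to dispatch the two implications separately, noting that one of them is already contained in the preceding corollary.

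The implication ``$\Omega$ Gauduchon $\Rightarrow$ $\Omega$ distinguished'' requires no new work: it is exactly the content of Corollary~\ref{cor:f-2-vaisman2}, applied with $|\theta_\Omega|_\Omega$ constant as we are assuming. So the real work lies entirely in the converse.

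For ``$\Omega$ distinguished $\Rightarrow$ $\Omega$ Gauduchon'' I would first invoke the dichotomy of Theorem~\ref{thm:criticalF}: a distinguished metric is either balanced, in which case $\theta_\Omega=0$ and $\Omega$ is trivially Gauduchon, or else $f_\Omega>0$ everywhere and $f_\Omega\Omega$ is Gauduchon. In the second case it suffices to prove that $f_\Omega$ is constant, for then $\Omega$ is a constant multiple of the Gauduchon metric $f_\Omega\Omega$ and hence itself Gauduchon.

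To show $f_\Omega$ is constant I would return to the reformulation of the distinguished condition as the elliptic equation $U_\Omega(f_\Omega^{n-1})=0$ obtained in the proof of Theorem~\ref{thm:criticalF}, with $U_\Omega$ as in \eqref{eqCritF2}. Writing $|\theta_\Omega|^2_\Omega\equiv c$, the defining relation $f_\Omega=|\theta_\Omega|^2_\Omega+d^{*_\Omega}\theta_\Omega$ gives $d^{*_\Omega}\theta_\Omega=f_\Omega-c$, so that, setting $p:=f_\Omega>0$, the equation becomes
\[ \Delta_\Omega(p^{n-1})-\langle d(p^{n-1})\vert\theta_\Omega\rangle_\Omega+p^{n-1}(p-c)=0. \]
The decisive step is then a maximum-principle argument on $p$. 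At a point where $p$ is maximal one has $d(p^{n-1})=0$ and $\Delta_\Omega(p^{n-1})\geq0$ (with the sign convention for $\Delta_\Omega$ already fixed in the proof of Theorem~\ref{thm:critical-g}), forcing $p^{n-1}(p-c)\leq0$ and hence $\max p\leq c$; symmetrically, at a point where $p$ is minimal one gets $\min p\geq c$. Combining these, $c\leq\min p\leq\max p\leq c$, so $p\equiv c$, i.e. $f_\Omega$ is constant. Consequently $d^{*_\Omega}\theta_\Omega=f_\Omega-c=0$, and $\Omega$ is Gauduchon.

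I expect the main point requiring care to be the \emph{strict positivity} of $f_\Omega$, which is what legitimizes both working with $p^{n-1}$ and reading off the sign of $p-c$ after dividing by $p^{n-1}>0$ at the extrema. This positivity is precisely what Theorem~\ref{thm:criticalF} supplies, and it is the reason the balanced case must be isolated at the outset; beyond that, the only delicate detail is the sign of $\Delta_\Omega$ at an interior maximum versus minimum, which is already settled by the convention used earlier in the paper.
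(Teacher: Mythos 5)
Your proof is correct and follows essentially the same route as the paper's: both reduce the distinguished condition to the elliptic equation $U_\Omega(f_\Omega^{n-1})=0$ from Theorem~\ref{thm:criticalF}, isolate the balanced case, and conclude by a maximum-principle argument at an extremum of the relevant function. The only (harmless) differences are that you keep $\Delta_\Omega(p^{n-1})$ unexpanded and sandwich $p$ between its max and min, whereas the paper expands the Laplacian of $f_\Omega^{n-1}$ and argues by contradiction at the minimum of $q=d^{*_\Omega}\theta_\Omega$ using $\int_X q\,\frac{\Omega^n}{n!}=0$.
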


\begin{proof}
 By the proof of Theorem~\ref{thm:criticalF}, under our hypothesis $\Omega$ is distinguished if and only if $f_\Omega^{n-1}=(d^{*_\Omega}\theta_\Omega+|\theta_\Omega|^2_\Omega)^{n-1}\in\ker U_\Omega$. Furthermore, either $f_\Omega=0$, in which case $\Omega$ is Gauduchon, or $f_\Omega>0$ on $X$. In the second case,  putting $q:=d^{*_\Omega}\theta_\Omega$, this further reads:
 \begin{equation*}
 (n-1)f_\Omega^{n-3}\left(f_\Omega\Delta_\Omega q-(n-2)|dq|^2_{\Omega}\right)+(n-1)f_\Omega^{n-2}\langle dq\vert\theta_\Omega\rangle_\Omega+f_\Omega^{n-1}q=0
 \end{equation*}
 or also:
 \begin{equation*}
 (n-1)f_\Omega\Delta_\Omega q=(n-1)(n-2)|dq|^2_\Omega-(n-1)f_\Omega\langle dq\vert\theta_\Omega\rangle_\Omega-f^2_\Omega q.
 \end{equation*}

 Let $x_\mathrm{min}\in X$ be a point where $q$ attains its minimum.  Since $\int_Xq\frac{\Omega^n}{n!}=0$, if $\Omega$ is not Gauduchon then $q(x_\mathrm{min})<0$, and we find, using that $f_\Omega(x_\mathrm{min})>0$:
\begin{equation*}
0\geq (n-1)f_\Omega(x_\mathrm{min})\Delta_\Omega q(x_\mathrm{min})=-q(x_\mathrm{min})f^2(x_\mathrm{min})>0
\end{equation*}
which is a contradiction, hence $\Omega$ is Gauduchon.

Conversely, if $\Omega$ is Gauduchon then it is either balanced, or the function $f_\Omega$ is constant and positive, hence $f_\Omega\Omega$ is Gauduchon. In both cases, $\Omega$ is distinguished by Theorem~\ref{thm:criticalF}.
\end{proof}

\begin{rmk}
Let the complex surface $X^2$ be a compact quotient of a solvable Lie group endowed with an invariant complex structure. (Here, invariant means locally homogeneous, that is, it is induced by a structure on the universal cover that is invariant by left-translations.)
Compact complex surfaces of this type are tori, hyperelliptic surfaces, Inoue surfaces of type $\mathcal S_M$ and of type $\mathcal S^\pm$, and primary and secondary Kodaira surfaces \cite{hasegawa}.
Then any invariant Hermitian metric $\Omega$ is a solution of \eqref{eq:critical-F-surface}. This follows because $dd^c\Delta_\Omega\Omega$ has to be invariant, but the only invariant $4$-forms are scalar multiple of the volume form, that can not be exact by the Stokes theorem. These distinguished metrics satisfy the hypothesis of the above corollary, i.e. are Gauduchon and $|\theta_\Omega|_\Omega^2$ is constant.
\end{rmk}

\section{The functional $\mathcal A$}
We consider now the functional:
\begin{equation}\label{eq:a}
\mathcal A \colon \mathcal{H}_1 \to \mathbb R, \qquad
\mathcal A(\Omega) := \int_X | d^c \Omega |_\Omega^2 \frac{\Omega^n}{n!} = \int_X | d \Omega |_\Omega^2 \frac{\Omega^n}{n!} ,
\end{equation}
as well as its restriction to a conformal class in $\mathcal H_1$.
Notice that clearly (almost-)K\"ahler metrics are  critical points for this functional.

\begin{rmk}\label{rmk: motivation-A}
The motivation behind the study of the functional $\mathcal A$ lies in the fact that $d^c\Omega$ is the torsion of the Bismut connection, which is the unique connection $\nabla^B$ on a Hermitian manifold $(X,J,g)$ such that $\nabla^Bg=0$, $\nabla^BJ=0$ and
$c(X,Y,Z)=g(X,T^B(Y,Z))$ is totally skew-symmetric, where $T^B$ denotes the torsion of $\nabla^B$
(see {\itshape e.g.} \cite{fino-tomassini-SKT}). It turns out that $c=d^c\Omega$ and
$$
g(\nabla_X^BY,Z)=g(\nabla^g_XY,Z)+\frac{1}{2}c(X,Y,Z)\,.
$$
\end{rmk}

\begin{rmk}\label{remdOmega}
If $\{\Omega\}_1$ is an (almost) lcK conformal class, then $\mathcal A$ is a constant multiple of $\mathcal L_G$. This comes from the fact that:
\begin{eqnarray*}
(n-1)^2|d\Omega|^2_\Omega &=&|\theta_\Omega\wedge\Omega|_\Omega^2=\langle \Omega\vert\iota_{\theta_\Omega^{\sharp_\Omega}}(\theta_\Omega\wedge\Omega)\rangle_\Omega\\
&=& n|\theta_\Omega|_\Omega^2-\Lambda_\Omega(\theta_\Omega\wedge J\theta_\Omega)= (n-1)|\theta_\Omega|^2_\Omega.
\end{eqnarray*}
Hence by \cite{gauduchon}, the critical points of $\mathcal A$ restricted to $\{\Omega\}_1$ in this case are the solutions to the equation $|\theta_\Omega|^2_\Omega+2d^{*_\Omega}\theta_\Omega=k\in\RR$.
\end{rmk}

\begin{rmk} Similarly, if $X$ is a compact almost complex $4$-dimensional manifold, then $\mathcal{A}$ coincides with $\mathcal{L}_{G}$. This is because in dimension $4$ one always has $d\Omega=\theta_{\Omega} \wedge \Omega$, hence as before, $|d\Omega|^2_{\Omega}=|\theta_\Omega|^2_\Omega$.
By \cite[Th\'eor\`eme III.4]{gauduchon}, it turns out that on compact complex surfaces, the critical points of $\mathcal A$ are exactly the K\"ahler metrics, which are in fact absolute minima.

\end{rmk}

In the general case, we have the following:
\begin{prop}\label{prop:crit-points-A}
On a compact almost complex $2n$-dimensional manifold, the critical points $\Omega$ for the functional $\mathcal A$ in \eqref{eq:a} restricted to the conformal class $\{\Omega\}_1$ in $\mathcal{H}_1$ are described by the equation
\begin{equation}\label{eq:critical-A-ndim}
(n-1) | d\Omega |_{\Omega}^2 + 2 d^{*_\Omega} \theta_\Omega = k
\end{equation}
for $k$ constant.
\end{prop}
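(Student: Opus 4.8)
The plan is to follow the same variational strategy used in the proofs of Theorem~\ref{thm:critical-g} and Proposition~\ref{prop:critical-f}. I would fix $\Omega\in\{\Omega\}_1$ and consider a conformal variation $\tilde\Omega_t=\phi_t\cdot\Omega$ with $\phi_t=1+t\dot\phi+o(t)$, where the volume-normalization $\mathrm{Vol}_{\tilde\Omega_t}(X)=1$ forces $\int_X\dot\phi\frac{\Omega^n}{n!}=0$, exactly as in \eqref{varOm}. The goal is to compute the first variation $\left.\frac{d}{dt}\right\lfloor_{t=0}\mathcal A(\tilde\Omega_t)$ and to read off the Euler--Lagrange condition.

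First I would rewrite $\mathcal A(\tilde\Omega_t)$ as an integral against the fixed volume form $\frac{\Omega^n}{n!}$. Since $\tilde\Omega_t=\phi_t\Omega$ corresponds to the conformally rescaled metric $\phi_t\,g$, the induced pointwise norm on $3$-forms scales by $\phi_t^{-3}$ while the volume form scales by $\phi_t^{n}$; hence $\mathcal A(\tilde\Omega_t)=\int_X\phi_t^{n-3}\,|d\tilde\Omega_t|_\Omega^2\,\frac{\Omega^n}{n!}$. Expanding $d\tilde\Omega_t=d\phi_t\wedge\Omega+\phi_t\,d\Omega=d\Omega+t(d\dot\phi\wedge\Omega+\dot\phi\,d\Omega)+o(t)$, collecting first-order terms, and noting that the exponent contribution $n-3$ from $\phi_t^{n-3}$ combines with the $+2$ from the cross term $2\dot\phi\,|d\Omega|_\Omega^2$ to give $n-1$, I would obtain
$$\left.\frac{d}{dt}\right\lfloor_{t=0}\mathcal A(\tilde\Omega_t)=\int_X\left[(n-1)|d\Omega|_\Omega^2\,\dot\phi+2\langle d\Omega\,\vert\,d\dot\phi\wedge\Omega\rangle_\Omega\right]\frac{\Omega^n}{n!}.$$

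The key step, and the one that makes the Lee form appear, is the treatment of the cross term. I would use that $d\dot\phi\wedge\Omega=L_\Omega(d\dot\phi)$ together with the adjointness $\Lambda_\Omega=L_\Omega^*$ and the definition $\theta_\Omega=\Lambda_\Omega d\Omega$ from \eqref{eq:theta-def} to rewrite $\langle d\Omega\,\vert\,d\dot\phi\wedge\Omega\rangle_\Omega=\langle\Lambda_\Omega d\Omega\,\vert\,d\dot\phi\rangle_\Omega=\langle\theta_\Omega\,\vert\,d\dot\phi\rangle_\Omega$, and then integrate by parts to get $\int_X\langle\theta_\Omega\,\vert\,d\dot\phi\rangle_\Omega\frac{\Omega^n}{n!}=\int_X d^{*_\Omega}\theta_\Omega\cdot\dot\phi\,\frac{\Omega^n}{n!}$. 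The first variation thus collapses to $\int_X\left[(n-1)|d\Omega|_\Omega^2+2d^{*_\Omega}\theta_\Omega\right]\dot\phi\,\frac{\Omega^n}{n!}$.

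Finally, $\Omega$ is a critical point if and only if this integral vanishes for every $\dot\phi$ with $\int_X\dot\phi\frac{\Omega^n}{n!}=0$; since such $\dot\phi$ sweep out the $L^2$-orthogonal complement of the constants, vanishing is equivalent to the bracketed expression being a constant, that is $(n-1)|d\Omega|_\Omega^2+2d^{*_\Omega}\theta_\Omega=k$, which is \eqref{eq:critical-A-ndim}. I expect the only genuinely delicate point to be the cross-term identity, which hinges on recognizing $d\dot\phi\wedge\Omega$ as $L_\Omega d\dot\phi$ and invoking $\Lambda_\Omega=L_\Omega^*$ to recover $\theta_\Omega$; the remainder is bookkeeping of the conformal scaling factors and of the first-order Taylor expansion.
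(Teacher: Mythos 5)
Your proposal is correct and follows essentially the same route as the paper's proof: the same conformal expansion $d(\phi_t\Omega)=d\phi_t\wedge\Omega+\phi_t\,d\Omega$ with the scaling factor $\phi_t^{n-3}$, the same bookkeeping giving the coefficient $n-1$, and the same treatment of the cross term via $\langle d\dot\phi\wedge\Omega\mid d\Omega\rangle_\Omega=\langle d\dot\phi\mid\Lambda_\Omega d\Omega\rangle_\Omega=\langle d\dot\phi\mid\theta_\Omega\rangle_\Omega$ followed by integration by parts. The concluding step, that vanishing of the first variation against all mean-zero $\dot\phi$ forces the integrand to be constant, matches the paper as well.
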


\begin{proof}
Consider a variation of $\Omega$ in its conformal class:
$$ \tilde\Omega_t = \phi_t \cdot \Omega , $$
where
$$ \phi_t=1+t\dot\phi+o(t), \quad \dot\phi\in\ce(X,\RR) \quad \text{such that} \quad \int_X \dot\phi \frac{\Omega^n}{n!}=0. $$

We compute:
\begin{eqnarray*}
\mathcal A(\tilde\Omega_t) &=&
\int_X | d^c \tilde\Omega_t |_{\tilde\Omega_t}^2 \frac{\tilde\Omega_t^n}{n!} 
= \int_X | d \tilde\Omega_t |_{\tilde\Omega_t}^2 \frac{\tilde\Omega_t^n}{n!} \\
&=& \int_X | d (\phi_t\Omega) |_{\Omega}^2 \phi_t^{n-3} \frac{\Omega^n}{n!}
= \int_X | d \phi_t \wedge \Omega + \phi_t d\Omega |_{\Omega}^2 \phi_t^{n-3} \frac{\Omega^n}{n!} \\
&=& \int_X | d \phi_t \wedge \Omega |^2 \phi_t^{n-3} \frac{\Omega^n}{n!} + \int_X | \phi_t d\Omega |_{\Omega}^2 \phi_t^{n-3} \frac{\Omega^n}{n!} + 2 \int_X \left\langle d \phi_t \wedge \Omega \middle\vert \phi_t d\Omega \right\rangle_\Omega \phi_t^{n-3} \frac{\Omega^n}{n!} \\
&=& \int_X | \phi_td\Omega |_{\Omega}^2 \phi_t^{n-3} \frac{\Omega^n}{n!} + 2 \int_X \left\langle td \dot\phi \wedge \Omega \middle\vert d\Omega \right\rangle_\Omega \frac{\Omega^n}{n!} + o(t) \\
&=& \int_X | d\Omega |_{\Omega}^2 \frac{\Omega^n}{n!} + t(n-1)\int_X | d\Omega |_{\Omega}^2 \dot\phi \frac{\Omega^n}{n!} + 2t \int_X \left\langle d \dot\phi \wedge \Omega \middle\vert d\Omega \right\rangle_\Omega \frac{\Omega^n}{n!} + o(t) \\
&=& \mathcal{A}(\Omega) + t \int_X \left((n-1) | d\Omega |_{\Omega}^2 + 2 d^{*_\Omega} \Lambda_\Omega d\Omega \right) \cdot \dot\phi \frac{\Omega^n}{n!} + o(t) \\
&=& \mathcal{A}(\Omega) + t \int_X \left((n-1) | d\Omega |_{\Omega}^2 + 2 d^{*_\Omega} \theta_\Omega \right) \cdot \dot\phi \frac{\Omega^n}{n!} + o(t)
\end{eqnarray*}
Therefore, the equation for the critical points is:
$$ (n-1) | d\Omega |_{\Omega}^2 + 2 d^{*_\Omega} \theta_\Omega = k $$
for $k$ constant.
\end{proof}

We have the following:

\begin{cor}\label{cor:g-1}
Let $X$ be a compact almost complex $2n$-dimensional manifold with $n>1$. If $\Omega\in\{\Omega\}_1$ is Gauduchon, then it is critical  for
 $\mathcal A$ on $\{\Omega\}_1$ if and only if
$| d\Omega |_{\Omega}^2$ is constant.
\end{cor}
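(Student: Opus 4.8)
The plan is to reduce this directly to the critical-point equation already established in Proposition~\ref{prop:crit-points-A}, exploiting the defining property of Gauduchon metrics. Recall that by definition a Gauduchon metric is one with co-closed Lee form, i.e. $d^{*_\Omega}\theta_\Omega=0$. Thus the strategy is simply to substitute this identity into the general characterization of critical points of $\mathcal{A}$ restricted to the conformal class.

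First I would invoke Proposition~\ref{prop:crit-points-A}, which states that $\Omega\in\{\Omega\}_1$ is critical for $\mathcal{A}$ if and only if
\[
(n-1)|d\Omega|_\Omega^2 + 2\,d^{*_\Omega}\theta_\Omega = k
\]
for some constant $k\in\RR$. Then, using the hypothesis that $\Omega$ is Gauduchon, the term $d^{*_\Omega}\theta_\Omega$ vanishes identically on $X$, so the criticality condition collapses to
\[
(n-1)|d\Omega|_\Omega^2 = k.
\]

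Finally, since $n>1$ we have $n-1\neq 0$, and we may divide by it: the displayed equation holds for some constant $k$ precisely when $|d\Omega|_\Omega^2$ is itself constant (namely equal to $k/(n-1)$). This gives both directions of the equivalence at once, so no separate converse argument is needed. I do not anticipate any genuine obstacle here: the entire content is the observation that the Gauduchon condition kills the first-order term $d^{*_\Omega}\theta_\Omega$ in the Euler--Lagrange equation, leaving only the zeroth-order quantity $|d\Omega|_\Omega^2$, whose constancy is then equivalent to criticality. The only point worth stating explicitly is the use of $n>1$ to ensure the coefficient $n-1$ is nonzero.
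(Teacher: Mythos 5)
Your argument is correct and is exactly the intended one: the paper states Corollary~\ref{cor:g-1} without proof precisely because it follows immediately from Proposition~\ref{prop:crit-points-A} by setting $d^{*_\Omega}\theta_\Omega=0$ in the critical-point equation, as you do. The observation that $n>1$ makes the coefficient $n-1$ nonzero is the only point needing mention, and you handle it.
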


\begin{cor}\label{cor:g-2}
Let $X$ be a compact almost complex $2n$-dimensional manifold with $n>1$. If $\Omega\in\{\Omega\}_1$ is critical for
 $\mathcal A$ and $| d\Omega |_{\Omega}^2$ is constant then
 $\Omega$ is Gauduchon.
\end{cor}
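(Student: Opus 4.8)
The plan is to read off the conclusion directly from the critical point equation established in Proposition~\ref{prop:crit-points-A}, using the extra hypothesis on $|d\Omega|_\Omega^2$ to eliminate the gradient term. Since $\Omega\in\{\Omega\}_1$ is assumed critical for $\mathcal A$ restricted to its normalized conformal class, I would start from equation \eqref{eq:critical-A-ndim}, namely
$$ (n-1)|d\Omega|_\Omega^2 + 2d^{*_\Omega}\theta_\Omega = k $$
for some constant $k\in\RR$.

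Next I would invoke the assumption that $|d\Omega|_\Omega^2$ is constant on $X$. Rearranging the critical point equation gives
$$ 2d^{*_\Omega}\theta_\Omega = k - (n-1)|d\Omega|_\Omega^2, $$
and since the right-hand side is a difference of two constants, the function $d^{*_\Omega}\theta_\Omega$ is itself constant on $X$.

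Finally I would integrate over $X$. On a compact manifold without boundary, the integral of the codifferential of any $1$-form vanishes, since $d^{*_\Omega}$ is the formal adjoint of $d$ and $d\,1=0$; thus $\int_X d^{*_\Omega}\theta_\Omega\,\frac{\Omega^n}{n!}=0$. As $d^{*_\Omega}\theta_\Omega$ is constant and the total volume $\int_X\frac{\Omega^n}{n!}=1$ is nonzero, this forces $d^{*_\Omega}\theta_\Omega=0$ everywhere, which is precisely the Gauduchon condition. There is essentially no obstacle in this argument: it is a two-line consequence of Proposition~\ref{prop:crit-points-A}, and the only step deserving a word of justification is the vanishing of $\int_X d^{*_\Omega}\theta_\Omega\,\frac{\Omega^n}{n!}$, which is the same integration-by-parts identity already used implicitly in the proof of Theorem~\ref{thm:critical-g}. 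This also makes clear that Corollary~\ref{cor:g-2} is exactly the converse half of Corollary~\ref{cor:g-1}, the constancy of $|d\Omega|_\Omega^2$ playing the role there of the Gauduchon hypothesis here.
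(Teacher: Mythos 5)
Your argument is correct and coincides with the paper's own proof: both deduce from the critical point equation \eqref{eq:critical-A-ndim} that $d^{*_\Omega}\theta_\Omega$ is constant and then conclude it vanishes because $\int_X d^{*_\Omega}\theta_\Omega\,\frac{\Omega^n}{n!}=0$. The only difference is that you spell out the integration-by-parts justification, which the paper leaves implicit.
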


\begin{proof}
From the equation of critical points for $\mathcal{A}$ if $| d\Omega |_{\Omega}^2$ is constant then $d^{*_\Omega}\theta_{\Omega}$ is also constant. Since
$\int_Xd^{*_\Omega}\theta_{\Omega}\frac{\Omega^n}{n!}=0$ we get that $d^{*_\Omega}\theta_{\Omega}=0$.
\end{proof}

 Recall that a Hermitian metric $\Omega$ on a compact complex manifold $X$ of dimension $n$ is called \emph{strong K\"ahler with torsion} (briefly SKT, also known as pluriclosed) if $dd^c\Omega=0$. When $n=2$, SKT metrics coincide by definition with Gauduchon metrics.
When $n>2$, an SKT metric $\Omega$ is Gauduchon if and only if:
 $$
 | d\Omega |_{\Omega}^2=|\theta_\Omega|^2_\Omega=\frac{1}{n-1}|\theta_\Omega\wedge\Omega|^2_\Omega, 
 $$
see {\itshape e.g.} \cite[Equation (2.13)]{alexandrov-ivanov}.
 As a consequence, we have:

 \begin{cor}\label{cor:g-3}
 Let $X^n$ be a compact complex manifold of dimension $n>2$. If $\Omega\in\{\Omega\}_1$ is both Gauduchon and SKT, then it is critical for
 $\mathcal A$ if and only if $ | \theta_{\Omega} |_{\Omega}^2$ is constant.
 \end{cor}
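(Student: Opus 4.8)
The plan is to avoid any new variational computation and instead reduce the statement to Corollary~\ref{cor:g-1}, using the pointwise identity between $|d\Omega|_\Omega^2$ and $|\theta_\Omega|_\Omega^2$ that characterizes when an SKT metric is Gauduchon. The whole argument is then a chaining of two facts already available in the excerpt: the variational criterion for Gauduchon critical points of $\mathcal A$, and the cited relation from \cite[Equation~(2.13)]{alexandrov-ivanov}.

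First I would invoke Corollary~\ref{cor:g-1}: since $\Omega\in\{\Omega\}_1$ is Gauduchon by hypothesis, it is critical for $\mathcal A$ if and only if $|d\Omega|_\Omega^2$ is constant on $X$. This disposes of the variational content of the equivalence and leaves only the task of trading $|d\Omega|_\Omega^2$ for $|\theta_\Omega|_\Omega^2$. Next I would use the displayed relation preceding the corollary, which asserts that for an SKT metric on $X^n$ with $n>2$ the Gauduchon condition is equivalent to the pointwise identity $|d\Omega|_\Omega^2=|\theta_\Omega|_\Omega^2$. As $\Omega$ is assumed to be simultaneously SKT and Gauduchon, this identity holds everywhere on $X$, so the two nonnegative functions $|d\Omega|_\Omega^2$ and $|\theta_\Omega|_\Omega^2$ agree at every point; in particular one is constant precisely when the other is.

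Combining the two steps yields the claimed chain of equivalences: $\Omega$ is critical for $\mathcal A$ on $\{\Omega\}_1$ iff $|d\Omega|_\Omega^2$ is constant iff $|\theta_\Omega|_\Omega^2$ is constant. I do not expect any genuine obstacle here, since the real analytic work is packaged into Corollary~\ref{cor:g-1} and the Alexandrov--Ivanov formula. The single point that must be checked with care is that the identity $|d\Omega|_\Omega^2=|\theta_\Omega|_\Omega^2$ is a pointwise equality rather than merely an integrated one, so that constancy transfers faithfully between the two functions; this is exactly what the cited formula provides, and so the proof reduces to reading off this equivalence.
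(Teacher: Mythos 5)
Your argument is correct and coincides with the paper's own (implicit) proof: the corollary is stated there precisely as a consequence of Corollary~\ref{cor:g-1} together with the pointwise identity $|d\Omega|_\Omega^2=|\theta_\Omega|_\Omega^2$ from \cite[Equation~(2.13)]{alexandrov-ivanov} for metrics that are both SKT and Gauduchon. Nothing further is needed.
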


\begin{rmk}
Note that Example \ref{ex:inoue-bombieri} shows that the infimum of the functional $\mathcal{A}$ on the space of Hermitian metrics of volume one can be zero even if there is no K\"ahler metric.
\end{rmk}

\section{The functional $\mathcal R$}
We consider now the functional:
\begin{equation}\label{eq:r}
\mathcal R \colon \mathcal{H}_1 \to \mathbb R, \qquad
\mathcal R(\Omega) := \int_X | dd^c \Omega |_\Omega^2 \frac{\Omega^n}{n!} ,
\end{equation}
as well as its restriction to a conformal class in $\mathcal H_1$.

The motivation behind the functional $\mathcal R$ is that SKT metrics on compact complex manifolds are by definition critical points of $\mathcal R$.

In general, we have:

\begin{prop}\label{prop:r-critical}
On a compact almost complex $2n$-dimensional manifold, the critical points $\Omega$ for the functional $\mathcal R$ in \eqref{eq:r} restricted to the conformal class $\{\Omega\}_1$ are described by the equation:
\begin{equation}\label{eq:critical-R-ndim}
(n-4) \left| dd^c\Omega\right|_{\Omega}^2 +  2\Lambda_\Omega (dd^c)^{*_\Omega} dd^c\Omega=k 
\end{equation}
for $k$ constant.
\end{prop}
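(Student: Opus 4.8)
The plan is to follow the conformal-variation scheme already used for $\mathcal{F}$ and $\mathcal{A}$ in Propositions~\ref{prop:critical-f} and~\ref{prop:crit-points-A}. Fix $\Omega\in\{\Omega\}_1$ and take a variation $\tilde\Omega_t=\phi_t\Omega$ with $\phi_t=1+t\dot\phi+o(t)$, $\dot\phi\in\ce(X,\RR)$, subject to the volume constraint $\int_X\dot\phi\frac{\Omega^n}{n!}=0$. The goal is to compute $\left.\frac{d}{dt}\right\lfloor_{t=0}\mathcal{R}(\tilde\Omega_t)$ and read off the Euler--Lagrange equation.

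Two ingredients are needed. First, since $dd^c(\phi_t\Omega)$ is a $4$-form, the scaling rule $*_{\tilde\Omega_t}\lfloor_{\wedge^4X}=\phi_t^{n-4}*_\Omega\lfloor_{\wedge^4X}$ together with $\frac{\tilde\Omega_t^n}{n!}=\phi_t^n\frac{\Omega^n}{n!}$ shows that all the metric dependence in the norm collapses into a single factor:
$$\mathcal{R}(\tilde\Omega_t)=\int_X\left|dd^c(\phi_t\Omega)\right|_\Omega^2\,\phi_t^{n-4}\frac{\Omega^n}{n!},\qquad \phi_t^{n-4}=1+t(n-4)\dot\phi+o(t).$$
Second, because $dd^c$ is a fixed, metric-independent and linear operator, the first-order variation of the integrand is simply $\left.\frac{d}{dt}\right\lfloor_{t=0}dd^c(\phi_t\Omega)=dd^c(\dot\phi\Omega)$; as a cross-check, expanding $dd^c(\phi\Omega)=\phi\,dd^c\Omega+dd^c\phi\wedge\Omega+d\phi\wedge d^c\Omega-d^c\phi\wedge d\Omega$ and differentiating at $\phi_t$ reproduces the same linear term.

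Combining these, the first variation becomes
$$\left.\frac{d}{dt}\right\lfloor_{t=0}\mathcal{R}(\tilde\Omega_t)=\int_X\left(2\langle dd^c\Omega\vert dd^c(\dot\phi\Omega)\rangle_\Omega+(n-4)\,|dd^c\Omega|_\Omega^2\,\dot\phi\right)\frac{\Omega^n}{n!}.$$
The decisive step is to integrate the first term by parts, transferring $dd^c$ onto $dd^c\Omega$ via its formal $\Omega$-adjoint $(dd^c)^{*_\Omega}$, and then to use that pairing a $2$-form with $\Omega$ is exactly the action of $\Lambda_\Omega$, i.e. $\langle\beta\vert\Omega\rangle_\Omega=\Lambda_\Omega\beta$ for $\beta\in\wedge^2X$:
$$\int_X\langle dd^c\Omega\vert dd^c(\dot\phi\Omega)\rangle_\Omega\frac{\Omega^n}{n!}=\int_X\langle (dd^c)^{*_\Omega}dd^c\Omega\vert\dot\phi\Omega\rangle_\Omega\frac{\Omega^n}{n!}=\int_X\dot\phi\,\Lambda_\Omega(dd^c)^{*_\Omega}dd^c\Omega\,\frac{\Omega^n}{n!}.$$
Hence $\left.\frac{d}{dt}\right\lfloor_{t=0}\mathcal{R}(\tilde\Omega_t)=\int_X\big((n-4)|dd^c\Omega|_\Omega^2+2\Lambda_\Omega(dd^c)^{*_\Omega}dd^c\Omega\big)\dot\phi\,\frac{\Omega^n}{n!}$, and $\Omega$ is critical precisely when this vanishes for every $\dot\phi$ of vanishing $\Omega$-integral. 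By the standard argument this forces the bracketed function to be $L^2$-orthogonal to all mean-zero functions, hence constant, which is exactly \eqref{eq:critical-R-ndim}.

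The calculation is routine once the template is in place. The only points requiring care are the degree bookkeeping --- confirming that $dd^c\Omega\in\wedge^4X$, that $(dd^c)^{*_\Omega}dd^c\Omega\in\wedge^2X$, and hence that $\Lambda_\Omega$ of it is a genuine function --- and the identification $\langle\beta\vert\Omega\rangle_\Omega=\Lambda_\Omega\beta$ for $\beta\in\wedge^2X$, which is responsible for the second term appearing as $\Lambda_\Omega(dd^c)^{*_\Omega}dd^c\Omega$ rather than as $(dd^c)^{*_\Omega}\Lambda_\Omega dd^c\Omega$. I expect no serious obstacle; the main simplification relative to $\mathcal F$ and $\mathcal A$ is that the first-order variation of $dd^c\Omega$ equals $dd^c(\dot\phi\Omega)$ because $dd^c$ is metric-independent, which is what lets the cross term collapse into a single pairing with $\Omega$.
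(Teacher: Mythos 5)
Your proposal is correct and follows essentially the same route as the paper: the same conformal variation $\phi_t=1+t\dot\phi+o(t)$ with the mean-zero constraint, the same collapse of the metric dependence into the factor $\phi_t^{n-4}$, the same linearization $dd^c(\phi_t\Omega)=dd^c\Omega+t\,dd^c(\dot\phi\Omega)+o(t)$, and the same integration by parts followed by the identification $\langle\beta\vert\Omega\rangle_\Omega=\Lambda_\Omega\beta$ to produce the term $\Lambda_\Omega(dd^c)^{*_\Omega}dd^c\Omega$. No gaps.
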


\begin{proof}
Consider a variation of $\Omega$ in its conformal class:
$$ \tilde\Omega_t = \phi_t \cdot \Omega , $$
where
$$ \phi_t=1+t\dot\phi+o(t), \quad \dot\phi\in\ce(X,\RR)\quad \text{such that} \quad \int_X \dot\phi \frac{\Omega^n}{n!}=0. $$

We compute:
\begin{eqnarray*}
\mathcal R(\tilde\Omega_t) &=&
\int_X | dd^c \tilde\Omega_t |_{\tilde\Omega_t}^2 \frac{\tilde\Omega_t^n}{n!}
= \int_X | dd^c (\phi_t\Omega) |_{\Omega}^2 \phi_t^{n-4} \frac{\Omega^n}{n!} \\
&=& \int_X \left| dd^c\Omega + tdd^c(\dot\phi\Omega)\right|_{\Omega}^2 (1+t(n-4)\dot\phi) \frac{\Omega^n}{n!} + o(t) \\
&=& \mathcal{R}(\Omega) + t \int_X \left( (n-4) \left| dd^c\Omega\right|_{\Omega}^2 \cdot \dot\phi + 2\langle dd^c(\dot\phi\Omega) \vert dd^c\Omega \rangle\right) \frac{\Omega^n}{n!} + o(t) \\
&=& \mathcal{R}(\Omega) + t \int_X \left( (n-4) \left| dd^c\Omega\right|_{\Omega}^2 +  2\Lambda_\Omega (dd^c)^{*_\Omega} dd^c\Omega \right) \cdot \dot\phi  \frac{\Omega^n}{n!} + o(t). \\
\end{eqnarray*}
Therefore, the equation for the critical points is:
$$(n-4) \left| dd^c\Omega\right|_{\Omega}^2 +  2\Lambda_\Omega (dd^c)^{*_\Omega} dd^c\Omega=k $$
for $k$ constant.
\end{proof}

\begin{cor}\label{cor:critR}
Let $(X,J)$ be an almost complex compact surface and let $\{\Omega\}_1$ be a conformal class of normalized almost Hermitian metrics. Then $\Omega\in\{\Omega\}_1$ is critical for $\mathcal F$ if and only if $\Omega$ is Gauduchon.
\end{cor}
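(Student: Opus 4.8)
The plan is to show that, on a compact almost complex surface, the functional $\mathcal R$ of \eqref{eq:r} coincides \emph{exactly} with the functional $\mathcal G$ of \eqref{eq:g}, and then to invoke Theorem \ref{thm:critical-g}, which already identifies the critical points of $\mathcal G$ in a normalized conformal class as precisely the Gauduchon metrics. In this way the whole statement reduces to a single pointwise identity relating $|dd^c\Omega|_\Omega^2$ and $(d^{*_\Omega}\theta_\Omega)^2$, and both implications of the corollary follow simultaneously.

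First I would exploit that $\dim_\RR X=4$: since $\Omega$ is a $2$-form, $dd^c\Omega$ is a $4$-form, hence of top degree. Therefore $dd^c\Omega=(*_\Omega dd^c\Omega)\,\frac{\Omega^2}{2}$ with $*_\Omega dd^c\Omega$ a smooth function, and consequently $|dd^c\Omega|_\Omega^2=(*_\Omega dd^c\Omega)^2$. The heart of the matter is to prove $*_\Omega dd^c\Omega=-d^{*_\Omega}\theta_\Omega$. I would establish this by testing against an arbitrary $\phi\in\ce(X,\RR)$: integrating by parts twice and using $\Lambda_\Omega(dd^c\phi)=-\Delta_\Omega\phi-\langle d\phi\,|\,\theta_\Omega\rangle_\Omega$ from \eqref{eq:ddc}, one obtains $\int_X\phi\,(*_\Omega dd^c\Omega)\frac{\Omega^2}{2}=\int_X\Lambda_\Omega(dd^c\phi)\frac{\Omega^2}{2}=-\int_X\phi\,d^{*_\Omega}\theta_\Omega\frac{\Omega^2}{2}$, and since $\phi$ is arbitrary the claimed identity follows. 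Equivalently, this is the $n=2$, $q=1$ instance of the computation $dd^c(q\Omega^{n-1})=-(U_\Omega q)\frac{\Omega^n}{n}$ carried out in the proof of Theorem \ref{thm:criticalF}, since $U_\Omega 1=d^{*_\Omega}\theta_\Omega$.

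Granting this, $|dd^c\Omega|_\Omega^2=(d^{*_\Omega}\theta_\Omega)^2$ pointwise, whence $\mathcal R(\Omega)=\int_X(d^{*_\Omega}\theta_\Omega)^2\frac{\Omega^2}{2}=\mathcal G(\Omega)$ for every almost Hermitian $\Omega$ on the surface. The two functionals thus agree on all of $\{\Omega\}_1$, so their restrictions have the same critical points; by Theorem \ref{thm:critical-g} these are exactly the Gauduchon metrics. (Indeed a Gauduchon metric has $d^{*_\Omega}\theta_\Omega=0$, hence minimizes $\mathcal R=\mathcal G$, and conversely any critical point is Gauduchon.) As a cross-check, one could instead start from the Euler–Lagrange equation of Proposition \ref{prop:r-critical} specialized to $n=2$, namely $-2|dd^c\Omega|_\Omega^2+2\Lambda_\Omega(dd^c)^{*_\Omega}dd^c\Omega=k$: pairing with $\Omega$ and using adjointness forces $k=0$, and after rewriting $\Lambda_\Omega(dd^c)^{*_\Omega}dd^c\Omega$ as $\Lambda_\Omega dd^c u$ with $u:=*_\Omega dd^c\Omega=-d^{*_\Omega}\theta_\Omega$, a maximum-principle argument as in the proof of Theorem \ref{thm:critical-g}, combined with $\int_X u\,\frac{\Omega^2}{2}=0$, yields $u\equiv 0$.

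The step I expect to be the main obstacle is the pointwise identity $*_\Omega dd^c\Omega=-d^{*_\Omega}\theta_\Omega$, specifically keeping track of the correct signs and verifying that it persists when $J$ is only almost complex (non-integrable). Since the derivation above rests solely on \eqref{eq:ddc} and integration by parts, both of which are available on any almost Hermitian surface, I expect this obstacle to be manageable. I would also note that even the weaker conclusion $|dd^c\Omega|_\Omega^2=c\,(d^{*_\Omega}\theta_\Omega)^2$ for some positive constant $c$ would suffice, since then $\mathcal R$ and $\mathcal G$ differ by a positive factor and share their critical points.
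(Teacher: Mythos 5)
Your proof is correct, and it takes a genuinely different route from the paper's. The paper proves this corollary by specializing the Euler--Lagrange equation of Proposition \ref{prop:r-critical} to $n=2$, showing $k=0$ by integration against $\Omega$, rewriting the equation as $q^2=\Delta_\Omega q+\langle dq\vert\theta_\Omega\rangle_\Omega$ with $q:=-d^{*_\Omega}\theta_\Omega$ (using exactly the identity $dd^c\Omega=-d^{*_\Omega}\theta_\Omega\cdot\frac{\Omega^2}{2}$ that you isolate as the key step, together with \eqref{eq:ddc}), and then running a minimum-principle argument at a minimum point of $q$ --- this is essentially your ``cross-check'' paragraph. Your primary argument instead upgrades the same pointwise identity to the functional identity $\mathcal R=\mathcal G$ on any almost complex surface and quotes Theorem \ref{thm:critical-g}; this is cleaner, avoids repeating the maximum-principle analysis, and yields existence, uniqueness, and the minimization claim for free by inheritance from $\mathcal G$, whereas the paper's argument is self-contained within the $\mathcal R$ section. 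Your derivation of $*_\Omega dd^c\Omega=-d^{*_\Omega}\theta_\Omega$ via testing against $\phi$ (or as the $q=1$, $n=2$ case of $dd^c(q\Omega^{n-1})=-(U_\Omega q)\frac{\Omega^n}{n}$ from the proof of Theorem \ref{thm:criticalF}) is sound and indeed uses nothing beyond integration by parts and \eqref{eq:ddc}, so it is valid in the non-integrable case. One small remark: the statement as printed says ``critical for $\mathcal F$'' where it clearly means ``critical for $\mathcal R$''; you interpreted it correctly.
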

\begin{proof}
Let us first note that in the case $n=2$, eq.~\eqref{eq:critical-R-ndim} is equivalent to:
\begin{equation}\label{eq:critR2}
|dd^c\Omega|_\Omega^2=\Lambda_\Omega (dd^c)^{*_\Omega}dd^c\Omega.
\end{equation}
Indeed, this follows by integrating eq.~\eqref{eq:critical-R-ndim} over $X$:
\begin{equation*}
k\cdot\mathrm{vol}_\Omega(X)=-2\mathcal{R}(\Omega)+2\int_X\langle \Omega\vert (dd^c)^{*_\Omega}dd^c\Omega\rangle_\Omega\frac{\Omega^2}{2}=0. 
\end{equation*}

Furthermore, note that since $n=2$, we have $dd^c\Omega=-d^{*_\Omega}\theta_\Omega\cdot\frac{\Omega^2}{2}$. Denoting by $q:=-d^{*_\Omega}\theta_\Omega$, we find:
\begin{eqnarray*}
\Lambda_\Omega(dd^c)^*dd^c\Omega &=&\langle \Omega\vert-*_\Omega dd^c*_\Omega(q\frac{\Omega^2}{2})\rangle_\Omega\\
&=&\langle *_\Omega\Omega\vert -dd^cq\rangle_\Omega\\
&=&-\Lambda_{\Omega}(dd^cq)\\
&=&\Delta_\Omega q+\langle dq\vert\theta_\Omega\rangle_\Omega
\end{eqnarray*}
where for the last equality, we used eq.~\eqref{eq:ddc}. Thus, eq.~\eqref{eq:critR2} is equivalent to:
\begin{equation}\label{eq:crit-R2}
q^2=\Delta_\Omega q+\langle dq\vert\theta_\Omega\rangle_\Omega.
\end{equation}

Now clearly Gauduchon metrics satisfy eq.~\eqref{eq:crit-R2} and minimize $\mathcal R$. Conversely, suppose that $\Omega$ satisfies eq.~\eqref{eq:crit-R2}, and suppose by contradiction that $\Omega$ is not Gauduchon. Then at a minimum point $x_{\mathrm{min}}\in X$ of $q$ we have $q(x_{\mathrm{min}})<0$, from which it follows:
\begin{equation*}
0<q^2(x_{\mathrm{min}})=\Delta_\Omega q(x_{\mathrm{min}})\leq 0
\end{equation*}
which is impossible. Hence $\Omega$ is Gauduchon.     
\end{proof}

\section{An example: the Inoue-Bombieri surface}\label{ex:inoue-bombieri}
We consider an Inoue-Bombieri surface of type $S_M$ \cite{inoue, bombieri}. As described in \cite{hasegawa}, it can be viewed as a compact quotient of a solvable Lie group. Consider the complex structure described by the coframe of invariant $(1,0)$-forms $\{\varphi^1,\varphi^2\}$ with structure equations:
$$ d\varphi^1=\frac{1}{2\sqrt{-1}}\varphi^1\wedge\varphi^2-\frac{1}{2\sqrt{-1}}\varphi^1\wedge\bar\varphi^2, \qquad d\varphi^2=-\sqrt{-1}\varphi^2\wedge\bar\varphi^2 . $$
Any invariant Hermitian metric has associated $(1,1)$-form:
\begin{equation}\label{eq:metric}
\Omega = \sqrt{-1}r^2 \varphi^1\wedge\bar\varphi^1 + \sqrt{-1}s^2 \varphi^2\wedge\bar\varphi^2 + u \varphi^1\wedge\bar\varphi^2 - \bar u \varphi^2\wedge\bar\varphi^1
\end{equation}
where the parameters $r,s\in\mathbb R$ and $u\in\mathbb C$ satisfy $r^2>0$, $s^2>0$, $r^2s^2 - |u|^2>0$. 

Let $c:=\int_{M} -\varphi^1 \wedge \bar\varphi^1 \wedge \varphi^2 \wedge \bar \varphi^2$. Then any choice of $r, s, u$ with $r^2s^2 -|u|^2=\frac{1}{c}$ gives a metric in $\mathcal{H}_1$.

We compute:
$$ d\Omega = r^{2}  \varphi^1 \wedge \varphi^{2} \wedge \bar\varphi^{1} + \frac{\sqrt{-1}}{2} u  \varphi^{1} \wedge \varphi^{2} \wedge \bar\varphi^{2} + r^{2}  \varphi^{1} \wedge \bar\varphi^{1} \wedge \bar\varphi^{2} - \frac{\sqrt{-1}}{2} \bar u  \varphi^{2} \wedge \bar\varphi^{1} \wedge \bar\varphi^{2} .$$
We get that the corresponding Lee form is:
$$ \theta = \frac{3 \, r^{2} u}{2 \, {\left(r^{2} s^{2} - |u|^2 \right)}} \varphi^1 + \sqrt{-1} \, \frac{2 \, r^{2} s^{2} + |u|^2 }{2 \, {\left(r^{2} s^{2} - |u|^2 \right)}} \varphi^2 + \frac{3 \, r^{2} \overline{u}}{2 \, {\left(r^{2} s^{2} - |u|^2 \right)}} \bar\varphi^1 - \sqrt{-1} \, \frac{2 r^{2} s^{2} + |u|^2 }{2 \, {\left(r^{2} s^{2} - |u|^2 \right)}} \bar\varphi^2 . $$

\begin{itemize}
\item Of course, since $d^{*}\theta$ is an invariant function, then $d^*\theta=0$, whence:
$$ \mathcal{G}(\Omega)=0 $$
for any invariant metric $\Omega$.

\item We compute:
\begin{eqnarray*}
dJ\theta &=&
\left( \frac{3 \, r^{2} u}{4 \, {\left(r^{2} s^{2} - |u|^2 \right)}} \right)  \varphi^{1} \wedge \varphi^{2} + \left( \frac{3 \, r^{2} \overline{u}}{4 \, {\left(r^{2} s^{2} - |u|^2\right)}} \right)  \bar\varphi^{1} \wedge \bar\varphi^{2} \\
&& + \left( -\frac{3 \, r^{2} u}{4 \, {\left(r^{2} s^{2} - |u|^2\right)}} \right)  \varphi^{1} \wedge \bar\varphi^{2} + \left( \frac{3 \, r^{2} \overline{u}}{4 \, {\left(r^{2} s^{2} - |u|^2\right)}} \right)  \varphi^{2} \wedge \bar\varphi^{1} \\
&& + \left( \frac{2 \sqrt{-1} \, r^{2} s^{2} + \sqrt{-1} \, |u|^2}{2 \, {\left(r^{2} s^{2} - |u|^2\right)}} - \frac{-2 \sqrt{-1} \, r^{2} s^{2} - \sqrt{-1} \, |u|^2}{2 \, {\left(r^{2} s^{2} - |u|^2\right)}} \right)  \varphi^{2} \wedge \bar\varphi^{2} .
\end{eqnarray*}
Thus, we infer:
\begin{eqnarray*}
\frac{dJ\theta \wedge *\overline{dJ\theta}}{\varphi^1\wedge\varphi^2\wedge\bar\varphi^1\wedge\bar\varphi^2} &=&
\frac{9 \, r^{4} |u|^2}{8 \, {\left(r^{2} s^{2} - |u|^2\right)}^{2}} + \frac{3 \, {\left(11 \, r^{4} s^{2} \overline{u} + 7 \, r^{2} |u|^{4}\right)} r^{2} u}{16 \, {\left(r^{2} s^{2} - |u|^2\right)^3}} \\
&& + \frac{3 \, {\left(11 \, r^{4} s^{2} + 7 \, r^{2} |u|^{2}\right)} r^{2} |u|^2}{16 \, {\left(r^{2} s^{2} - |u|^2\right)^3}} \\
&& + \frac{{2\left(-4 \sqrt{-1} \, r^{6} s^{2} - 5 \sqrt{-1} \, r^{4} |u|^2\right)} {\left({2 \sqrt{-1} \, r^{2} s^{2} + \sqrt{-1} \, |u|^2}\right)}}{4 \, {\left(r^{2} s^{2} - |u|^{2} \right)^3}}.
\end{eqnarray*}

For example, when $u=0$, we find:
\begin{equation*}
dJ\theta = 2\sqrt{-1} \, \varphi^{2} \wedge \bar\varphi^{2} , \ \   \left| dJ\theta \right|^2 = 4\frac{1}{s^4}.
\end{equation*}

Thus, making $s$ go to $\infty$ and taking $r^2=\frac{1}{cs^2}$, we conclude that the infimum for the functional $\mathcal F$ on $\mathcal{H}_1$ is $0$. However, $0$ is not a minimum, since $dJ\theta_\Omega=0$ would give $dd^{*_\Omega}\Omega=0$ whence $|d^{*_\Omega}\Omega|^2_\Omega=|d\Omega|^2_\Omega=0$, but $S_M$ does not admit any K\"ahler metric.

\item We compute:
\begin{eqnarray*}
|d^c\Omega|_\Omega^2 \frac{\Omega^2}{2} &=& |d\Omega|^2_\Omega \frac{\Omega^2}{2} = d\Omega\wedge*_\Omega d\Omega \\
&=& \left( \frac{3 \, r^{2} |u|^2 }{2 \, {\left(r^{2} s^{2} - |u|^2 \right)}} + \frac{2{\left(2 \, r^{2} s^{2} + |u|^2 \right)} r^{2}}{2(r^{2} s^{2} - |u|^2) } \right)  \varphi^{12\bar1\bar2} \\
&=& \left( \frac{5 \, r^{2} |u|^2 +4 \, r^{4} s^{2} }{2 \, {\left(r^{2} s^{2} - |u|^2 \right)}} \right)  \varphi^{12\bar1\bar2} ,
\end{eqnarray*}
which for example for $u=0$ gives:
$$ |d^c\Omega|_\Omega^2 \frac{\Omega^2}{2} = 2r^2 \varphi^{12\bar1\bar2}. $$
Thus, making $r$ go to $\infty$ and taking $s^2=\frac{1}{cr^2}$, we conclude again that the infimum of $\mathcal A$ on $\mathcal{H}_1$ is $0$ and is not attained.

\item As for the last functional, any invariant metric is Gauduchon by the Stokes theorem, whence:
$$ dd^c\Omega=0 . $$
\end{itemize}

\bigskip

{\small
\noindent{\sl Acknowledgments.}
The authors are very grateful to Paul Gauduchon for his teachings, support, and suggestions.
We also thank Andrei Moroianu for useful remarks.
Many thanks also to Max Pontecorvo, whom we wish a belated happy birthday!\\
Most of the work has been written during the fourth-named Author's post-doctoral fellow at the Dipartimento di Matematica e Informatica ``Ulisse Dini'' of the Universit\`{a} di Firenze. She is grateful to the department for the hospitality.
}


\begin{thebibliography}{99}

\bibitem[AI01]{alexandrov-ivanov}
B. Alexandrov, S. Ivanov, Vanishing theorems on Hermitian manifolds, {\em Differential Geom. Appl.} \textbf{14} (2001), no. 3, 251--265.

\bibitem[Bes87]{besse} A. Besse, {\em Einstein Manifolds}, Springer Verlag, 1987

\bibitem[Bom73]{bombieri}
E. Bombieri, Letter to Kodaira, 1973.

\bibitem[FT09]{fino-tomassini-SKT}
A. Fino, A. Tomassini, A survey on strong KT structures, {\em Bull. Math. Soc. Sci. Math. Roumanie (N.S.)} \textbf{52(100)} (2009), no. 2, 99--116.

\bibitem[Gau77a]{gauduchon-CRAS}
P. Gauduchon, Le th\'eor\`eme de l'excentricit\'e nulle, {\em C. R. Acad. Sci. Paris S\'er. A-B} \textbf{285} (1977), no. 5, A387--A390.



\bibitem[Gau84]{gauduchon}
P. Gauduchon, La $1$-forme de torsion d'une vari\'et\'e hermitienne compacte, {\em Math. Ann.} \textbf{267} (1984), no. 4, 495--518.

\bibitem[Gau97]{gauduchon-bumi}
P. Gauduchon, Hermitian connections and Dirac operators, {\em Boll. Un. Mat. Ital. B (7)} \textbf{11} (1997), no. 2, suppl., 257--288.



\bibitem[GGP08]{grantcharov-grantcharov-poon}
D. Grantcharov, G. Grantcharov, Y. S. Poon, Calabi-Yau connections with torsion on toric bundles, {\em J. Differential Geom.} \textbf{78} (2008), no. 1, 13--32.

\bibitem[Has05]{hasegawa}
K. Hasegawa, Complex and K\"ahler structures on compact solvmanifolds, Conference on Symplectic Topology, {\em J. Symplectic Geom.} \textbf{3} (2005), no. 4, 749--767.

\bibitem[Hop27]{hopf} E. Hopf, Elementare Bemerkungen \"uber die L\"osungen partieller Differentialgleichungen zweiter Ordnung vom elliptischen Typus. Sitzungber. Preuss. Akad. Wiss. Phys. Math. Kl. \textbf{19}, (1927) 147--152

\bibitem[Lie54]{libermann}
P. Libermann, Sur les connexions hermitiennes, {\em C. R. Acad. Sci. Paris} \textbf{239} (1954), 1579--1581.

\bibitem[Ino74]{inoue}
M. Inoue, On surfaces of Class $VII_{0}$, {\em Invent. Math.} \textbf{24} (1974), 269--310.


\bibitem[Vai90]{vaisman}
I. Vaisman, On some variational problems for $2$-dimensional Hermitian metrics, {\em Ann. Global Anal. Geom.} \textbf{8} (1990), no. 2, 137--145. 

\end{thebibliography}
\end{document}